	\newtheorem{theorem}{Theorem}[section]
	\newtheorem{lemma}[theorem]{Lemma}
	\newtheorem{proposition}[theorem]{Proposition}
    	\newtheorem{corollary}[theorem]{Corollary}
	\theoremstyle{definition}
	\newtheorem{definition}[theorem]{Definition}
	\theoremstyle{remark}
	\numberwithin{equation}{section}
	\def\deg{{\rm deg}}
	\def\C{{\mathbb C}}
	\def\P{\mathbb P_{\mathbb C}}
	\title[Exceptional Curves]{Exceptional algebraic curves  for infinite  subgroups of ${\rm PGL}(n+1,\mathbb{C})$} 
	\author{Angel Cano}
	\address{ UCIM UNAM, Av. Universidad s/n. Col. Lomas de Chamilpa, C.P. 62210, Cuernavaca, Morelos, México.}
	\email{angelcano@im.unam.mx}
	\author{ Rodrigo Davila }
	\address{ P O Box 500701, Office No. 108, Block 3, Dubai Knowledge Park, Dubai.}
	\email{r.figueroa@educationpathways.ac.ae }
	\author{ Luis Loeza }
	\address{ IIT UACJ, Av. del Charro no. 610 Nte. Col. Partido Romero CP 32310,  Ciudad Juárez, Chihuahua, México.}
	\email{luis.loeza@uacj.mx}
	\subjclass{Primary 37F80, 30F40,   Secondary 20H10, 57M60}
\begin{document}

		\begin{abstract}
We classify algebraic curves in $\mathbb{CP}^{n}$ ($n \geq 2$) that are invariant under an infinite subgroup of $\operatorname{PGL}(n+1,\mathbb{C})$. In particular, we prove that any irreducible,
non-degenerate, one-dimensional algebraic set in $\mathbb{CP}^{n}$ invariant under an infinite subgroup of $\operatorname{Aut}(\mathbb{CP}^{n})$  must be projectively equivalent to a monomial curve.			
		\end{abstract}

		\maketitle

\section{Introduction}

In the 1990s, foundational work by J. Seade and A. Verjovsky~\cite {SV} established deep connections between discrete group actions on $\mathbb{CP}^n$, foliation theory, and the iteration of holomorphic endomorphisms. This framework was later extended by an analog of Sullivan's dictionary in dimension two~\cite{BCNS}, bridging the dynamics of holomorphic endomorphisms of $\mathbb{CP}^2$ with discrete groups of biholomorphisms. Motivated by these ideas, we investigate invariant algebraic varieties under projective transformations, with a focus on curves. Our results align closely with the rigidity principles of the Matsumura-Monsky theorem~\cite{MM}, which constrains automorphism groups of projective varieties, revealing analogous constraints on invariant sub-varieties. 

Building on this interplay between group actions and geometric rigidity, our work also connects to a central conjecture in iteration dynamics, positing that invariant sub-varieties of endomorphisms $f \colon \mathbb{CP}^n \to \mathbb{CP}^n$ must be linear subspaces. Although this has been established for dimensions~1 and~2~\cite{BCS}, degree-$(n+1)$ divisors~\cite[Theorem~2.1]{HN}, and smooth hypersurfaces~\cite{CN}, the general case of the higher dimension remains open; for recent progress, see H\"oring~\cite{Ho}. In this article, we prove:

\begin{theorem} \label{t:main1}
Let \( g \in \mathrm{PGL}(n+1,\mathbb{C}) \), \( n \geq 2 \), be an element of infinite order, and let \( S \subset \mathbb{CP}^n \) be a one-dimensional algebraic set invariant under \( g \). For every irreducible component \( S_0 \subset S \), one of the following holds:
\begin{enumerate}
    \item  After a coordinate change, \( S_0 \) is parametrized by a monomial curve, {\it i.e.} it coincides with the image of
    \[
        \xi_{\mathbf{k}} \colon \mathbb{CP}^1 \to \mathbb{CP}^n, \quad [z:w] \mapsto \left[z^{k_1}: z^{k_2}w^{k_1 - k_2}: \cdots : z^{k_m}w^{k_1 - k_m}: w^{k_1}: \mathbf{0}\right],
    \]
    where \( 1 < m \leq n \), the exponents satisfy \( 1 \leq k_m < \cdots < k_2 < k_1 \), and \( \gcd(k_1, \ldots, k_m) = 1 \); or
    
    \item There exists a finite-index subgroup \( H \subset G \) preserving \( S_0 \), such that \( H \) acts trivially on the minimal projective subspace spanned by \( S_0 \).
\end{enumerate}
\end{theorem}
The dichotomy in Theorem~\ref{t:main1} echoes structural patterns in foliation theory, where analogous classifications emerge. For example, for $ d\geq 2$, the space of degree-$d$ holomorphic foliations on $\mathbb{CP}^2$ without invariant algebraic curves is residual~\cite{J}. When strengthened to non-virtually discrete cyclic groups, our main result yields a sharp classification:

\begin{corollary} \label{t:main2}
Let $G \subset \mathrm{PGL}(n+1,\mathbb{C})$, $n \geq 2$, be an infinite discrete non-virtually cyclic subgroup. Then, any $G$-invariant algebraic curve $S \subset \mathbb{CP}^n$ is either a line or a non-singular rational curve.
\end{corollary}

To prove these results, our approach combines the projection method with classical invariants of algebraic curves, such as genus, Plücker formulae, and degree, departing from the techniques in~\cite{GR, P} while building upon their insights.
The study of invariant algebraic varieties remains a central theme in modern geometry, with deep connections to dynamical systems, representation theory, and complex geometry (see \cite {GR}). These open problems not only inspire new methodologies, but also challenge established paradigms in algebraic dynamics, as discussed in~\cite{GR}.

The paper is organized as follows. Section~\ref{s:nb} introduces projective geometry conventions, curve invariants, and basic properties of monomial curves. Section~\ref{s:gec} establishes the rationality of invariant curves via projections and normalization. Section~\ref{s:par} classifies their parametrizations as monomial curves, while Section~\ref{s:examples} characterizes their automorphism groups. Key tools include Hurwitz’s theorem, Plücker formulae, and duality arguments.

\section{Preliminaries} \label{s:nb}
\subsection{Projective Geometry}\label{subsec:projective-geometry}

The \textbf{complex projective space} $\mathbb{CP}^n$ is the quotient space
\[
\mathbb{CP}^n = \big(\mathbb{C}^{n+1} - \{\mathbf{0}\}\big)/\mathbb{C}^*,
\]
where $\mathbb{C}^* = \mathbb{C} - \{0\}$ acts by scalar multiplication. We denote by 
\[
[\cdot] \colon \mathbb{C}^{n+1} - \{\mathbf{0}\} \to \mathbb{CP}^n
\] 
the canonical projection. A subset $\ell \subseteq \mathbb{CP}^n$ is a \textbf{$k$-dimensional projective subspace} if its preimage $[\ell]^{-1} \cup \{\mathbf{0}\}$ is a $(k+1)$-dimensional linear subspace of $\mathbb{C}^{n+1}$. As customary, one-dimensional spaces will be referred to as \textbf{projective lines}. For any subset $K \subseteq \mathbb{CP}^n$, we write $\langle\!\langle K \rangle\!\rangle$ for the smallest projective subspace containing $K$.

To systematically study the geometry of subspaces, we work with the \textbf{Grassmanian} $\operatorname{Gr}_k(\mathbb{CP}^n)$, defined as the space of all $k$-dimensional projective subspaces of $\mathbb{CP}^n$, endowed with the natural topology of Hausdorff convergence.

Central to our discussion are automorphisms of $\mathbb{CP}^n$, {\it i.e.} the group of \textbf{projective transformations}, which is defined as
\[
\operatorname{PGL}(n+1,\mathbb{C}) = \operatorname{GL}(n+1,\mathbb{C})/\mathbb{C}^*,
\]
where $\mathbb{C}^*$ acts by scalar multiplication. We again denote by 
\[
[\cdot] \colon \operatorname{GL}(n+1,\mathbb{C}) \to \operatorname{PGL}(n+1,\mathbb{C}).
\]
The quotient map. For $g \in \operatorname{PGL}(n+1,\mathbb{C})$, any matrix $\mathbf{g} \in \operatorname{GL}(n+1,\mathbb{C})$ with $[\mathbf{g}] = g$ is called a \textbf{lift} of $g$. Following \cite{CLU}, we say that $g$ is \textbf{elliptic} if it admits a diagonalizable lift $\mathbf{g}$ with all eigenvalues unitary. A key tool throughout this text is the projection from a fixed point, which is  defined as follows:

\begin{definition}[Projection and Induced Homomorphism, {\it cf. } \cite{CNS}]
Let $p \in \mathbb{CP}^n$ be a point, and $\mathcal{L} \subset \mathbb{CP} ^n$ a hyperplane not containing $p$. We define:
\begin{enumerate}
    \item The \emph{projection from $p$ to $\mathcal{L}$} as the map,
    \[
    \pi_{p,\mathcal{L}}: \mathbb{CP}^n - \{p\} \to \mathcal{L}, \quad x \mapsto \langle\langle x, p\rangle\rangle \cap \mathcal{L},
    \]
    where $\langle\langle x, p\rangle\rangle$ denotes the projective line through $x$ and $p$.

    \item The \emph{induced homomorphism} 
    \[
    \Pi_{p,\mathcal{L}}: G_p \to \operatorname{Bihol}(\mathcal{L}), \quad g \mapsto \left(x \mapsto \pi_{p,\mathcal{L}}(g(x))\right),
    \]
    where $G_p \leq \operatorname{PGL}(n+1,\mathbb{C})$ is the stabilizer subgroup of $p$.
    \item For every $g\in G_p$ the following diagram commutes:
\begin{equation}\label{d:proy}
\begin{tikzcd}
\mathbb{CP}^n - \{p\} \arrow[r, "g"] \arrow[d, "\pi_{p,\mathcal{L}}"'] & \mathbb{CP}^n - \{p\} \arrow[d, "\pi_{p,\mathcal{L}}"] \\
\mathcal{L} 
\arrow[r, "\Pi_{p,\mathcal{L}}(g)"] & \mathcal{L}
\end{tikzcd}
\end{equation}
\end{enumerate}
When the point $p$ and hyperplane $\mathcal{L}$ are clear from context, we simplify notation by writing $\pi = \pi_{p,\mathcal{L}}$ and $\Pi = \Pi_{p,\mathcal{L}}$.
\end{definition}

The following result highlights the interplay between invariant subspaces and point orbits for non-elliptic transformations, which will later be crucial in our analysis of curve stabilizers:
\begin{proposition}[See Lemmas 2.2 and 2.8 in \cite{CLU}]\label{prop:non-elliptic}
Let $g \in \operatorname{PGL}(n+1,\mathbb{C})$ be a non-elliptic projective transformation. Then there exist proper, non-empty invariant subspaces $\mathcal{K}, \mathcal{L} \subsetneq \mathbb{CP}^n$ (possibly equal) such that for every $q \notin \mathcal{K} $, the forward orbit $\{ g^m \cdot q:m\in \mathbb{N}\}$  is infinite, and its accumulation set lies in $\mathcal{L} $.
\end{proposition}

\subsection{Algebraic Curves}\label{subsec:algebraic-curves}

In this subsection, we introduce the fundamental notions and results concerning algebraic curves, referring to \cite{BK, GH, M} for comprehensive treatments. Throughout this section, let \( S \) be a compact Riemann surface of genus \( g \), and let \( f \colon S \to \mathbb{CP}^n \) be a holomorphic map whose image \( C = f(S) \) is reduced and non-degenerate ({\it i.e.}, \( f \) is non-constant and \( C \) is not contained in any hyperplane). Recall, also that $C$ is called {\bf totally unramified} if $C$ does not posse nodes and its inflection set is empty. 

To analyze the geometry of such curves, we first consider their \emph{associated curves}, which encode local osculating behavior and provide a framework for studying global properties.  

\begin{definition}[Associated Curves]\label{d:ac}
For each \( 0 \leq k \leq n-1 \), the \textbf{\( k \)-th associated curve} 
\[
f_k \colon S \to \operatorname{Gr}_k(\mathbb{CP}^n) \hookrightarrow \mathbb{P}\left(\bigwedge\nolimits^{k+1} \mathbb{C}^{n+1}\right)
\]
maps a point \( p \in S \) to the \textbf{osculating \( k \)-plane} at \( p \), defined as the projective span of the derivatives, {\it i.e.} \( F(p) \wedge F'(p) \wedge \cdots \wedge F^{(k)}(p) \). Here, \( F \colon U \to \mathbb{C}^{n+1}- \{0\} \) is a local holomorphic lift of \( f \) on a neighborhood \( U \subset S \) of \( p \), and derivatives are taken with respect to a local parameter. 
\end{definition}

As customary, the \textbf{degree} \( r_k (C)\) of \( f_k \) is the intersection number of \( f_k(S) \) with a generic hyperplane in \( \mathbb{P}\left(\bigwedge^{k+1} \mathbb{C}^{n+1}\right) \), counted with multiplicities. Local singularities and inflection points are captured by finer invariants called \emph{ramification indices}, which are defined below.

\begin{definition}[Ramification Indices]\label{def:ramification}
Let \( C \subset \mathbb{CP}^n \) be a curve with normalization \( f \colon S \to C \). For a point \( p \in S \) and \( 1 \leq k \leq n \), the \textbf{\( k \)-th ramification index} \( s_k^p(C) \) is defined in either of the following equivalent ways:
\begin{enumerate}
    \item \textit{Via associated curves:}  
          Consider the \( k \)-th associated curve \( f_k \colon S \to \operatorname{Gr}_k(\mathbb{CP}^n) \). Then \( s_k^p(C) \) is the ramification index of \( f_k \) at the point \( p \).

    \item \textit{Via local Laurent expansion:}  
          Choose local coordinates centered at \( p \in S \) and \( f(p) \in C \), and let \( F \colon U \to \mathbb{C}^{n+1} - \{0\} \) be a holomorphic lift of \( f \) near \( p \). Then \( F \) admits a Laurent expansion of the form
          \[
              F(t) = \big(c_0 t^{\alpha_0} + O(t^{\alpha_0+1}),\, c_1 t^{\alpha_1} + O(t^{\alpha_1+1}),\, \ldots,\, c_n t^{\alpha_n} + O(t^{\alpha_n+1})\big),
          \]
          where \( 0 = \alpha_0 < \alpha_1 < \cdots < \alpha_n \). The ramification index is then given by
          \[
              s_k^p(C) := \alpha_{k+1} - \alpha_k - 1.
          \]
\end{enumerate}
\end{definition}

A point \( p \in S \) is called a \textbf{Weierstrass point} (or {\bf W-point}) if \( s_k^p(C) \neq 0 \) for some \( k \); its image \( f(p) \) is an \textbf{inflection point} of \( C \). The curve \( C \) has \textbf{simple Weierstrass points} if every Weierstrass point \( p \) satisfies \( s_l^p(C) = 1 \) for exactly one index \( l \) and \( s_j^p(C) = 0 \) for all \( j \neq l \). The \textbf{global \( k \)-th ramification index} is defined as the sum:
\[
s_k(C) := \sum_{p \in S} s_k^p(C).
\]
We omit the notation \( (C) \) when the context is unambiguous.
 
Combining these concepts, we obtain a generalization of the classical Plücker formulas, which constrain the possible configurations, as stated below:

\begin{theorem}[Linear Plücker formulae, {\cite[p. 273]{GH}}]\label{thm:plucker}  
Let \( f \colon S \to \mathbb{CP}^n \) be a non-degenerate holomorphic map with reduced image \( C \). Then, for \( 0 \leq k \leq n-1 \),  
\[
r_{k-1} - 2r_k + r_{k+1} = 2g - 2 - s_k,  
\]  
with the conventions \( r_{-1} = r_n = 0 \).  
\end{theorem}

The following formulae, which are  derived from the Linear Plücker formulae, will also be useful:
\begin{corollary} [ See page 3 in \cite{W} ]
Let \( f \colon S \to \mathbb{CP}^n \) be a non-degenerate holomorphic map with reduced image \( C \). Then,
    \begin{equation}\label{e:plud}
        r_j = (j + 1)r_0 - 2\binom{j+1}{2} - \sum^{j-1}_{i=0} (j - i)s_i, \quad \text{for } 1 \leq j \leq n-1.
    \end{equation}
\end{corollary}

\begin{corollary} [ Plücker identity, See Lemma~1.2 in \cite{W} ]\label{l:1.2}
Let \( f \colon S \to \mathbb{CP}^n \) be a non-degenerate holomorphic map with a reduced image \( C \). Then,
    \[
        \sum_{j=0}^{n-1} s_j = r_0 + r_{n-1} - 2n.
    \]
\end{corollary}
Next, to ground these notions, we consider monomial curves.

\begin{definition}[Monomial Curves] \label{e:general} 
Let \(k_1,\ldots,k_{n} \in \mathbb{N}\) be natural numbers that satisfy \(k_1 > \cdots > k_{n} > 0\). The map \(\xi_{\mathbf{k}}:\mathbb{CP}^1 \rightarrow \mathbb{CP}^{n}\) defined by
\[
\xi_{\mathbf{k}}([z,w]) = \left[z^{k_1}: z^{k_2}w^{k_1-k_2}:\ldots: z^{k_n}w^{k_1-k_n}: w^{k_1}\right],
\]
where \(\mathbf{k}=(k_1,\ldots,k_n)\), is called a \textbf{monomial curve} and its image \textbf{monomial set}. When \(\gcd(k_1,\ldots,k_n)=1\), we call \(\xi_{\mathbf{k}}\) a \textbf{proper monomial curve}.
\end{definition}

\begin{figure}[ht]
    \centering
    \includegraphics[width=0.8\textwidth]{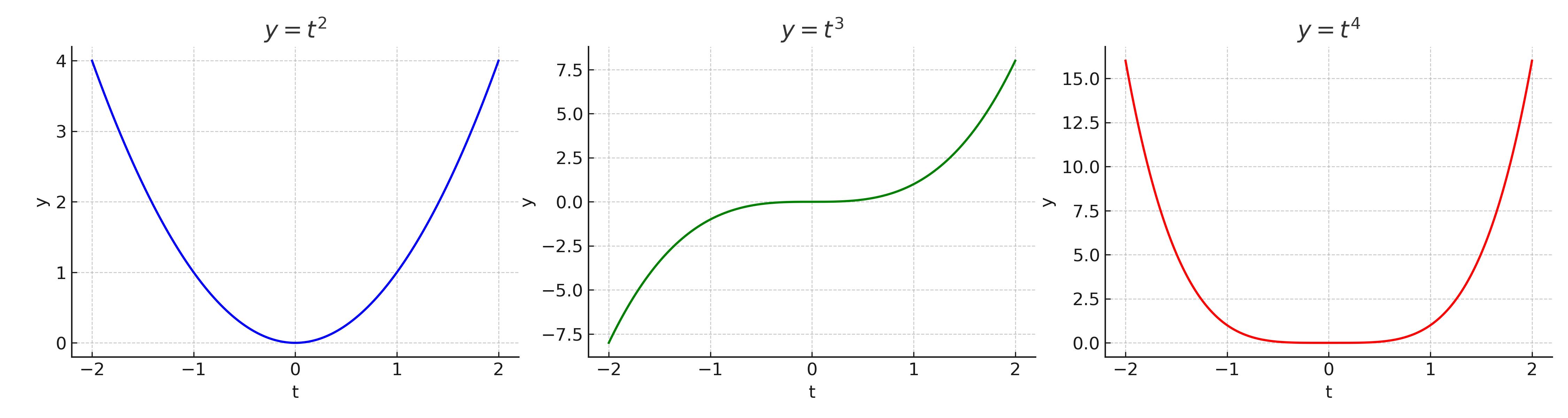}
    \caption{Examples of monomial sets}
    \label{fig:ejemplo}
\end{figure}

A canonical example of a monomial curve is the \textbf{rational normal curve}, corresponding to \(\mathbf{n}=(n,n-1,\ldots,1)\). This curve is distinguished by its total lack of Weierstrass points, making it totally unramified, and its projective automorphism group is given by:  

\begin{equation}\label{e:alex}
\iota_n\begin{pmatrix}a & b\\ c & d\end{pmatrix}_{m,j} = \frac{\binom{n}{j-1}}{\binom{n}{m-1}} \sum_{k=\delta_{m,j}}^{\Delta_{m,j}} \binom{n+1-j}{k} \binom{j-1}{m-1-k} a^{n+1-j-k}b^{j-m+k}c^k d^{m-1-k},
\end{equation}
where \(\delta_{m,j} = \max\{0,m-j\}\) and
\[
\Delta_{m,j} = \begin{cases}
m-1 & \text{if } m \leq \min\{j,n+2-j\} \text{ or } j < m \leq n+2-j,\\
n-j & \text{if } n+2-j < m \leq j \text{ or } \max\{j,n+2-j\} < m.
\end{cases}
\]
It can also be seen that this group satisfies the following relation
\begin{equation}\label{d:ver}
\begin{tikzcd}
\mathbb{CP}^1  \arrow[r, "g"] \arrow[d, "\xi_{\mathbf{n}}"'] & \mathbb{CP}^1  \arrow[d, "\xi_{\mathbf{n}}"] \\
\mathbb{CP}^n
\arrow[r, "\iota_n(g)"] & \mathbb{CP}^n
\end{tikzcd}
\end{equation}

For proper monomial curves besides the rational normal curve, the geometry is more constrained:  

\begin{itemize}
    \item The Weierstrass points are precisely \(\{[1:0], [0:1]\}\);
    \item The curve is invariant under the action of the diagonal subgroup
    \[
    \mathcal{H}_{\mathbf{k}} = \left\{\left[\operatorname{diag}\left(\alpha^{k_1}, \alpha^{k_2}\beta^{k_1-k_2},\ldots,\alpha^{k_n}\beta^{k_1-k_n},\beta^{k_1}\right)\right ] \mid \alpha,\beta \in \mathbb{C}^*\right\}.
    \]
\end{itemize}

\section{Geometry of exceptional curves} \label{s:gec}

The projection method in classical algebraic geometry, developed by 19\textsuperscript{th}-century mathematicians such as Cremona and Noether, simplifies the study of complex curves by projecting them from a carefully chosen center---one that avoids secants, tangents, and special loci---into lower-dimensional spaces, while preserving key invariants such as degree, genus, and singularities. This technique transforms high-dimensional problems into more tractable ones, enabling both curve classification and birational analysis (see \cite{GH, H, W, W1}). In this article, we build upon these classical ideas to establish our results. However, applying this method in our setting typically necessitates projecting from a singular point---a situation that, as Wall shows in \cite{W1}, can produce pathological behavior in the image curve. In the sections that follow, we demonstrate that the presence of a group preserving the curve prevents such pathologies, even when the projection originates from a singularity.

For the reader’s convenience, we begin with a brief roadmap outlining the structure and goals of this section. Our primary objective is to show that any irreducible, non-degenerate curve \( S\subset \mathbb{CP}^n \), invariant under an infinite-order projective transformation, must normalize to \( \mathbb{CP}^1 \). In Lemma \ref{l:lcontrol}, we project \( S \) from a fixed point and invoke Chow’s and Remmert’s theorems to ensure that the projected curve remains algebraic and non-degenerate. Lemma~\ref{l:fp} establishes the existence of periodic points under non-elliptic transformations, and Lemma~\ref{l:genus} combines these insights to prove that the normalization of \( S \) has genus zero. With rationality established, Corollaries \ref{c:lift}--\ref{c:finin} then show that the group action on \( S \) semiconjugates to a Möbius transformation on \( \mathbb{CP}^1 \). These results lay the foundation for the monomial parametrization developed in Section~\ref{s:par}.

\begin{lemma}\label{l:lcontrol}
    Let $S \subset \mathbb{CP}^n$ (with $n \geq 2$) be an irreducible non-degenerate algebraic curve, $p \in \mathbb{CP}^n$ a point, $\mathcal{L} \subset \mathbb{CP}^n$ a hyperplane not containing $p$, and $g \in \operatorname{PGL}(n+1,\mathbb{C})$ an infinite-order projective transformation fixing $p$ and preserving $S$. Then:
    \begin{enumerate}
        \item  The set $S_{p, \mathcal{L}}:= \overline{\pi_{p,\mathcal{L}}(S - \{p\})}$ is a non-degenerate irreducible algebraic curve in $\mathcal{L}$.
        \item The curve  $S_{p, \mathcal{L}}$ is invariant under the induced transformation $\Pi(g) \in \operatorname{PGL}(n,\mathbb{C})$, and $\Pi(g)$ has infinite order.
    \end{enumerate}
\end{lemma}

\begin{proof}
    Let $\widetilde{S}$ be the desingularization of $S$ with birational map $\psi\colon \widetilde{S} \to S$ (see \cite[p. 621]{GH}). Then the composition
    \[
    \pi_{p,\mathcal{L}} \circ \psi|_{\widetilde{S} - \psi^{-1}(p)}
    \]
    extends holomorphically to a map $\hat{\pi}\colon \widetilde{S} \to \mathbb{CP}^{n-1}$.
By Remmert's Proper Mapping Theorem \cite[p.~65]{C}, 
the image of a proper holomorphic map between complex spaces is analytic. 
Applying this to $\hat{\pi}$, we deduce that $C = \hat{\pi}(\widetilde{S})$ 
is analytic in $\mathcal{L}$. Then, by Chow's Theorem \cite[p.~73]{C}, 
it follows that $C$ is algebraic. Now we claim:

    \textbf{Claim:} We have $C = S_{p, \mathcal{L}}$.
The Inclusion $C \subset S_{p, \mathcal{L}}$ is  immediate from the density of $\widetilde{S} - \psi^{-1}(p)$ in $\widetilde{S}$. To get  reverse inclusion $S_{p, \mathcal{L}} \subset C$,  observe that given  $x \in S_{p, \mathcal{L}}$, there are  sequences:
    \begin{itemize}
        \item $x_m \in \pi_{p,\mathcal{L}}(S - \{p\})$ with $x_m \to x$
        \item $w_m \in S - \{p\}$ with $\pi_{p,\mathcal{L}}(w_m) = x_m$
        \item $u_m \in \widetilde{S} - \psi^{-1}(p)$ with $\psi(u_m) = w_m$
    \end{itemize}
    By compactness of $\widetilde{S}$, we have $u_m \to u \in \widetilde{S}$ after subsequence. Then:
    \[
    \hat{\pi}(u) = \lim_{m \to \infty} \hat{\pi}(u_m) = \lim_{m \to \infty} \pi_{p,\mathcal{L}}(w_m) = x
    \]
    Thus $x \in C$.

To establish the  non-degeneracy, let us assume, on the contrary, that 
    $S_{p, \mathcal{L}}$ were degenerate in $\mathcal{L} \cong \mathbb{CP}^{n-1}$. Then there would exist a proper subspace $\mathcal{T} \subsetneq \mathcal{L}$ containing $S_{p, \mathcal{L}}$. This would imply:
    \[
    S \subset \langle \langle \mathcal{T}, p \rangle \rangle \subsetneq \mathbb{CP}^n
    \]
    contradicting $S$'s non-degeneracy.

The remaining statements follow directly.
\end{proof}

In the following  given an algebraic curve $S\subset \mathbb{CP}^n$, a point $p\in \mathbb{CP}^n$, and a hyperplane $\mathcal{L}\subset \mathbb{CP}^n$ not containing $p$ we will use the notation  $S_{p, \mathcal{L}}:= \overline{\pi_{p,\mathcal{L}}(S - \{p\})}$. 
The projected curve $S_{p,\mathcal{L}}$ will play a central role in analyzing invariant curves. We now leverage this structure to study periodic points and normalization.

\begin{figure}[ht]
    \centering
    \includegraphics[width=0.8\textwidth]{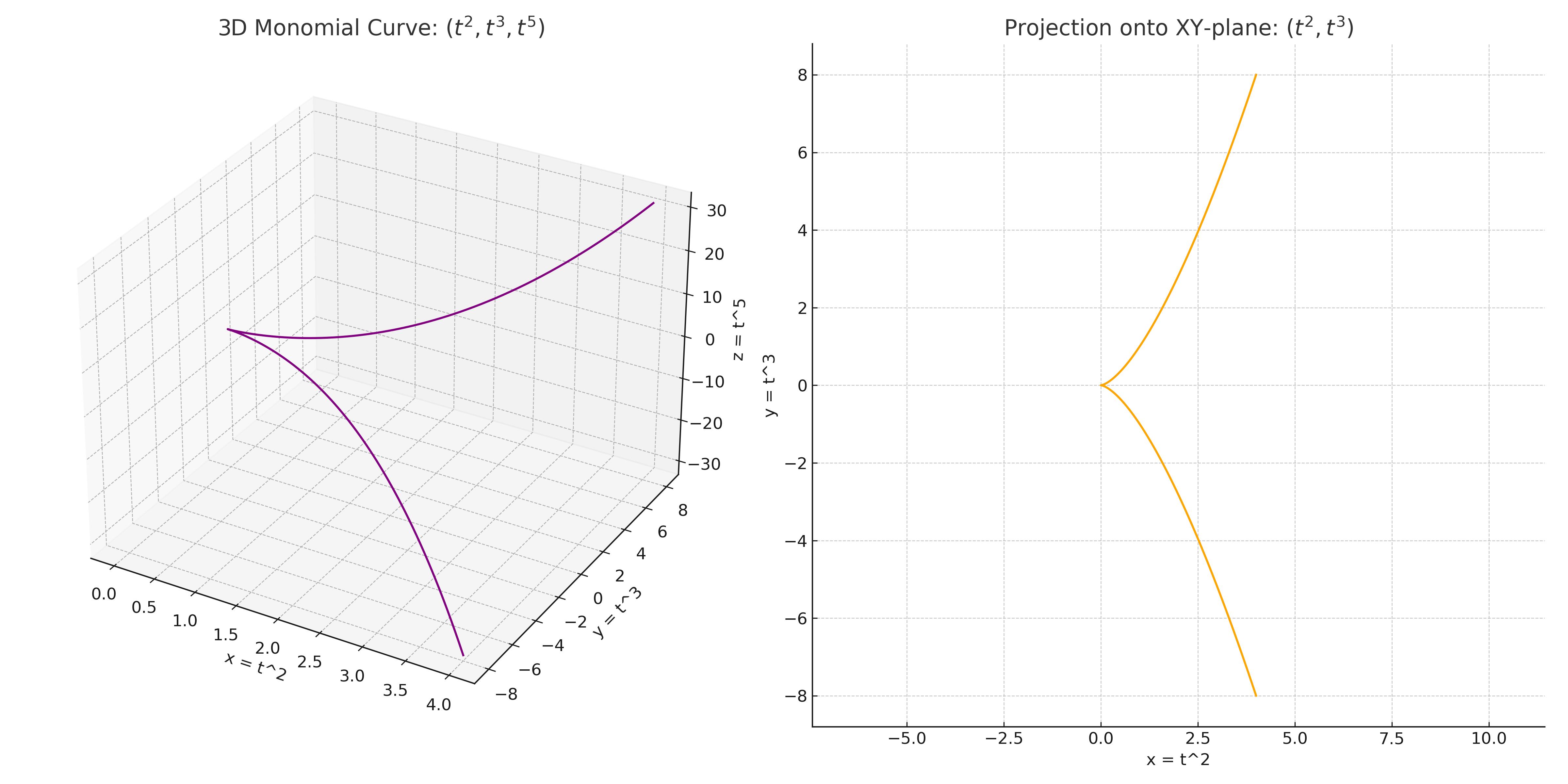}
    \caption{Example of projected monomial curve}
    \label{fig:ejemplo2}
\end{figure}

\begin{lemma} \label{l:fp}  
Let \( g \in \operatorname{PGL}(n+1,\mathbb{C}) \), with \( n \geq 2 \), be an element of infinite order, and let \( S \subset \mathbb{CP}^n \) be an irreducible, non-degenerate, \( g \)-invariant algebraic curve. Then:
\begin{enumerate}
    \item The restriction \( g|_S \) has infinite order.
    \item The curve \( S \) contains a periodic point of \( g \).
\end{enumerate}
\end{lemma}

\begin{proof}
For the elliptic case,  we may assume, after conjugation, that \(g\) admits a diagonal lift in \(\operatorname{SL}(n+1, \mathbb{C})\). Since \(S\) is irreducible, there exists a point.
\[
p \in S - \bigcup_{j=1}^{n+1} \langle\langle [e_1], \ldots, \widehat{[e_j]}, \ldots, [e_{n+1}] \rangle\rangle,
\]
where \(\widehat{[e_j]}\) denotes omission of \([e_j]\). The orbit \(\{g^m p\}_{m\in\mathbb {Z}}\) is infinite, as the diagonal action ensures all coordinates are multiplied by distinct eigenvalues.

Let \(\mathcal{L} = \langle\langle [e_1], \ldots, [e_n] \rangle\rangle\), which is a \(g\)-invariant hyperplane. By Bézout's Theorem~\cite[Theorem 18.4]{H}, the intersection \(\mathcal{L} \cap S\) is finite and non-empty. Since both \(\mathcal{L}\) and \(S\) are \(g\)-invariant, every point of \(\mathcal{L} \cap S\) is necessarily periodic under \(g\).

\noindent
In the non-elliptic case, Proposition~\ref{prop:non-elliptic} yields $g$-invariant subspaces 
\(
\mathcal{K},\;\mathcal{L}\;\subset\;\mathbb{CP}^n
\)
such that, for every $q\notin\mathcal{K}$, the forward orbit 
\(
\{\,g^m(q):m\in\mathbb{N}\}
\)
is infinite, and its accumulation set lies in $\mathcal{K}$.  Since $S$ is irreducible, choose 
\(
q_0\in S-\mathcal{L}.
\) 
Then the full orbit 
\(
\{\,g^m(q_0):m\in\mathbb{Z}\}
\)
is infinite.  Compactness and $g$-invariance of $S$ imply that at least one accumulation point of this orbit, call it $\widetilde q$, belongs to 
\(
S\cap\mathcal{L}.
\)
Finally, as $S$ and $\mathcal{L}$ are each $g$-invariant and $S$ is irreducible, the intersection 
\(
S\cap\mathcal{L}
\)
is a nonempty finite $g$-invariant set, hence composed entirely of periodic points of $g\,$.  
\end{proof}

The preceding results on invariance and periodic points constrain the geometry of 
$S$. We now exploit these constraints to classify the normalization of 
$S$.

\begin{lemma}[{\it cf.} Proposition~2 in \cite{P}] \label{l:genus}
Let \( g \in \mathrm{PGL}(n+1,\mathbb{C}) \) with \( n \geq 2 \) be an element of infinite order, and let \( S \subset \mathbb{CP}^n \) be an irreducible non-degenerate complex algebraic curve invariant under \( g \). Then the normalization \( \widetilde{S} \) of \( S \) is rational, i.e., \( \widetilde{S} \cong \mathbb{CP}^1 \).
\end{lemma}

\begin{proof}

Let \(\psi: \widetilde{S} \to S\) denote the normalization of \(S\), where \(\widetilde{S}\) is the smooth model of \(S\), and let \(\operatorname{Sing}(S)\) denote the singular locus of \(S\). Then there exists a biholomorphism
\[
\widehat{g}: \widetilde{S} - \psi^{-1}(\operatorname{Sing}(S)) \longrightarrow \widetilde{S} - \psi^{-1}(\operatorname{Sing}(S))
\]
making the diagram
\[
\begin{tikzcd}
\widetilde{S} - \psi^{-1}(\operatorname{Sing}(S)) \arrow[r, "\widehat{g}"] \arrow[d, "\psi"'] & \widetilde{S} - \psi^{-1}(\operatorname{Sing}(S)) \arrow[d, "\psi"] \\
S - \operatorname{Sing}(S) \arrow[r, "g"] & S - \operatorname{Sing}(S)
\end{tikzcd}
\]
commute. Since \(\operatorname{Sing}(S)\) is finite, by  Picard’s great theorem \(\widehat{g}\) extends holomorphically to an automorphism \(\widetilde{g} \in \operatorname{Aut}(\widetilde{S})\). By Lemma~\ref{l:fp}, \(\widetilde{g}\) has infinite order.  Hurwitz's theorem states that the automorphism group of a compact Riemann surface of genus $g \geq 2$ has order at most $84(g-1)$ \cite[Theorem 3.9]{M}. Since $\widetilde{g} \in \operatorname{Aut}(\widetilde{S})$ has infinite order, this bound forces $g = 0$ or $g = 1$.

To conclude, note that by Proposition~\ref{l:fp}, for all sufficiently large integers \(m\), the iterate \(g^m\) admits a fixed point \(x \in S\). Then
\[
x = g^m(x) = g^m\bigl(\psi(\psi^{-1}(x))\bigr) = \psi\bigl(\widetilde{g}^m(\psi^{-1}(x))\bigr),
\]
and hence \(\psi^{-1}(x)\) is a fixed point of \(\widetilde{g}^m\).
However, by Proposition~1.12 of \cite{M}, if \(\widetilde{S}\) has genus \(1\), then every point stabilizer in \(\operatorname{Aut}(\widetilde{S})\) is finite. Since \(\widetilde{g}\) has infinite order, it follows that \(\widetilde{S}\) must have genus \(0\).
\end{proof}

The rationality of 
$S$ enables a precise description of how 
$g$ lifts to the normalization, as shown next.

\begin{corollary} \label{c:lift}
Let \( n \geq 2 \), and let \( g \in \mathrm{PGL}(n+1,\mathbb{C}) \) be an element of infinite order. Suppose \( S \subset \mathbb{CP}^n \) is an irreducible, non-degenerate complex algebraic curve invariant under \( g \), and let \( \psi: \mathbb{CP}^1 \to S \) be the normalization of \( S \). Then there exists a unique Möbius transformation  \(\widetilde{g}: \mathbb{CP}^1 \to \mathbb{CP}^1\) of infinite order making the following diagram commute:
\[
\begin{tikzcd}
\mathbb{CP}^1 \arrow[r, "\widetilde{g}"] \arrow[d, "\psi"'] & \mathbb{CP}^1 \arrow[d, "\psi"] \\
S \arrow[r, "g"'] & S
\end{tikzcd}
\]
\end{corollary}
 \begin{proof}
This result immediately follows from the analysis of Lemma \ref{l:genus}, so we omit its proof for brevity.
 \end{proof}

\begin{corollary}
Let \( g \in \operatorname{PGL}(n+1,\mathbb{C}) \) (with \( n \geqslant 2 \)) be an element with infinite order, and let \( S \subset \mathbb{CP}^{n} \) be an irreducible, non-degenerate, rational algebraic complex curve that is invariant under \( g \). If \( \psi:\mathbb{CP}^{1} \to S \) is a birational equivalence, then for each \( 0 \leq k \leq n-1 \) the image of the associated curve \( \psi_{k} : S \to \operatorname{Gr}_{k}(\mathbb{CP}^{n}) \subset \mathbb{P}(\bigwedge^{k+1} \mathbb{C}^{n+1}) \), in symbols $S_k$, is invariant under \( \bigwedge^{k+1} g \).
\end{corollary}

\begin{proof}
Since $S_{k}$ is an algebraic curve, it suffices to verify the lemma for the smooth points of $S_{k}$. Let $p = \psi_{k}(z)$ be such a smooth point. By definition, $p$ is the unique $k$-plane in $\mathrm{Gr}_{k}(\mathbb{CP}^{n})$ that has contact of order at least $k+1$ with $S$ at $\psi(z)$---that is, $p$ is the $k$-osculating plane of $S$ at $\psi(z)$ (see \cite[page 264]{GH}).

The group action of $g$ on $\mathrm{Gr}_{k}(\mathbb{CP}^{n}) \subset \mathbb{P}(\bigwedge^{k+1} \mathbb{C}^{n+1})$ is given by a biholomorphism. Consequently, $g \cdot p$ is a $k$-plane that has contact of order at least $k+1$ with $S$ at $g \cdot \psi(z)$. By uniqueness, $g \cdot p$ must therefore be the $k$-osculating plane of $S$ at $g \cdot \psi(z)$, meaning $g \cdot p = \psi_{k}(g \cdot z)$.  

This shows that $g \cdot p \in S_{k}$, establishing the $g$-invariance of $S_{k}$.  
\end{proof}

\begin{proof}
Let \( \widetilde{g} \) be as in Corollary \ref{c:lift}, to conclude observe \( \psi_{k} \circ \widetilde{g} = \bigwedge^{k+1} g \circ \psi_{k} \).
\end{proof}

Due to the previous corollary, we get:

\begin{corollary}
Let \( g \in \operatorname{PGL}(n+1,\mathbb{C}) \) (with \( n \geqslant 2 \)) be an element with infinite order, and let \( S \subset \mathbb{CP}^{n} \) be an irreducible, non-degenerate, rational algebraic complex curve that is invariant under \( g \). If \( \psi:\mathbb{CP}^{1} \to S \) is a birational equivalence, then the set of inflections point is invariant under $g$
\end{corollary}

\begin{corollary}[{\textit{cf.}} Corollary 2 in \cite{P}]\label{c:finin}
Let \( g \in \operatorname{PGL}(n+1, \mathbb{C}) \) (with \( n \geq 2 \)) be an element of infinite order, \( S \subset \mathbb{CP}^n \) an irreducible, non-degenerate rational complex curve that is \( g \)-invariant, and \( \psi: \mathbb{CP}^1 \to S \) a birational map. If \( \Sigma \subset S \) is a finite \( g \)-invariant subset, then \( \psi^{-1}(\Sigma) \) contains at most two points. In particular, for a fixed point \( p \in S \) of \( g \), the following hold:
\begin{enumerate}
    \item \label{i:pi1} The set 
    \[
   \widetilde{Sing}(S) = \left\{ x \in \mathbb{CP}^1 : x \textrm{ is a  W -point of } \psi \right\} \cup \psi^{-1}\big(\operatorname{Sing}(S) \cup \{p\}\big)
    \]
    contains at most two points.
    \item  \label{i:pi2}The singular set \( \operatorname{Sing}(S) \) is either empty, consists of one cusp, two cusps, or a node with two branches.
\end{enumerate}
\end{corollary}

\begin{proof} 
Let \(\Sigma\subset S\) be a \(g\)-invariant finite set. If \( \widetilde{g} \) is the Möbius transformation of infinite order given by Corollary \ref{c:lift}, then \( \psi^{-1}(\Sigma) \) is a finite \( \widetilde{g} \)-invariant set. However, a Möbius transformation of infinite order can only preserve a finite set if that set contains at most two points. It follows that \( \psi^{-1}(\Sigma) \) contains at most two elements.

To show Item (\ref{i:pi1}), first observe that the set of  W-points is finite (see pages 264--266 in \cite{GH}). Since \( g \) is a biholomorphism, we deduce that \( g \) preserves singularities and the inflection set of \( S \). Thus, \(\widetilde{Sing}(S) \) contains at most two points.

The proof of Item (\ref{i:pi2}) is straightforward, so we omit it here.
\end{proof}

With the normalization established, we now examine how invariants of 
$g$ constrains the critical structure of projections.

\begin{lemma}\label{l:theta}
Let \( g \in \operatorname{PGL}(n+1, \mathbb{C}) \) (for \( n \geq 2 \)) be an element of infinite order, and let \( S \subset \mathbb{CP}^n \) be a non-degenerate irreducible rational complex algebraic curve that is invariant under \( g \). Suppose \( p \in S \) is a fixed point of \( g \), and let \( \mathcal{L} \subset \mathbb{CP}^n \) be a hyperplane not containing \( p \). Denote by \( \psi: \mathbb{CP}^1 \to S \) and \( \nu: \mathbb{CP}^1 \to S_{p,\mathcal{L}} \) birational maps, where \( S_{p,\mathcal{L}} \) is the projection of \( S \) from \( p \) onto \( \mathcal{L} \). 

Then there exist: 
\begin{itemize}
    \item Elements \( \widetilde{g}, \widehat{g} \in \operatorname{PGL}(2, \mathbb{C}) \) of infinite order,
    \item A rational map \( \vartheta: \mathbb{CP}^1 \to \mathbb{CP}^1 \),
\end{itemize}
such that the following hold:
\begin{enumerate}
    \item \label{i:pp1} The composition \( \nu \circ \vartheta \) coincides with \( \pi_{p,\mathcal{L}} \circ \psi \), as birational maps.

    \item For every integer \( m \in \mathbb{Z} \), the diagram below commutes:

\begin{equation}\label{e:diagram}
\begin{tikzcd}[row sep=3em, column sep=3.5em]
& \mathbb{CP}^1 \arrow[rr, "\vartheta", near start] \arrow[dd, "\widetilde{g}^m" near start, swap] \arrow[dl, "\psi" near end, swap] & & \mathbb{CP}^1 \arrow[dl, "\nu" near end] \arrow[dd, "\widehat{g}^m" near start] \\
S \arrow[rr, crossing over, "\pi_{p,\mathcal{L}}" near start] \arrow[dd, "g^m" near start, swap] & & S_{p,\mathcal{L}} \\
& \mathbb{CP}^1 \arrow[rr, "\vartheta" near start] \arrow[dl, "\psi" near end, swap] & & \mathbb{CP}^1 \arrow[dl, "\nu" near end] \\
S \arrow[rr, "\pi_{p,\mathcal{L}}" near start] & & S_{p,\mathcal{L}} \arrow[from=uu, crossing over, "\Pi_{p,\mathcal{L}}(g^m)" near end, swap]
\end{tikzcd}
\end{equation}

\end{enumerate}
\end{lemma}

\begin{proof}
Define the domain  
\[
U = \mathbb{CP}^1 - \Big(   \widetilde{Sing}(S) \cup \psi^{-1}\big(\pi^{-1}_{p,\mathcal{L}}(\operatorname{Sing}(S_{p,\mathcal{L}}))\Big),
\]  
and let \( \widetilde{\vartheta}: U \to \mathbb{CP}^1 \) be the holomorphic map given by  
\[
\widetilde{\vartheta}([z, w]) = \nu^{-1}\Big(\pi_{p,\mathcal{L}}\big(\psi([z, w])\big)\Big).
\]  
Here, \( \nu^{-1} \) is well-defined on the smooth locus of \( S_{p,\mathcal{L}} \), and \( U \) excludes the indeterminacy loci of \( \psi \) and the singularities of \( S_{p,\mathcal{L}} \).
By Bézout's Theorem, the projection \( \pi_{p,\mathcal{L}} \circ \psi \) has finite fibers. Thus, there exists \( k_0 \in \mathbb{N} \) such that every point in \( \widetilde{\vartheta}(U) \) has at most \( k_0 \) preimages under \( \widetilde{\vartheta} \).
The map \( \widetilde{\vartheta} \) omits at most finitely many points of \( \mathbb{CP}^1 \) (due to the excluded locus \( U \)). By Picard's Great Theorem, \( \widetilde{\vartheta} \) extends holomorphically to a map \( \vartheta: \mathbb{CP}^1 \to \mathbb{CP}^1 \). Since \( \vartheta \) is holomorphic and \( \mathbb{CP}^1 \) is compact, \( \vartheta \) is necessarily a rational map. By construction, \( \vartheta \) satisfies  
\[
\nu \circ \vartheta(z) = \pi_{p,\mathcal{L}} \circ \psi(z) \quad \text{for all } z \in U.
\]

Following the methodology of Lemma \ref{l:genus} (which constructs semiconjugacies for automorphisms of curves), there exist elements \( \widetilde{g}, \widehat{g} \in \operatorname{PGL}(2, \mathbb{C}) \) of infinite order such that:  
\begin{align*}
\psi \circ \widetilde{g}^m &= g^m \circ \psi, \\
\nu \circ \widehat{g}^m &= \Pi(g)^m \circ \nu \quad \text{for all } m \in \mathbb{Z}.
\end{align*}

To establish \( \widehat{g}^m \circ \vartheta = \vartheta \circ \widetilde{g}^m \), observe that both sides are rational maps coinciding on the Zariski-dense open set \( U \). By the identity theorem for rational maps, equality holds globally. This completes the commutativity of the diagram.  
\end{proof}

The semiconjugacy 
$\vartheta$ tightly controls the critical locus, as summarized below.

\begin{corollary}\label{l:theta2}
Adopting the hypotheses and notation of Lemma \ref{l:theta}, the following hold:
\begin{enumerate}
    \item \label{i:th1} The map \( \pi_{p,\mathcal{L}} \) establishes a birational equivalence precisely when \( \vartheta \) lies in \( \operatorname{PGL}(2, \mathbb{C}) \).
    \item \label{i:th3} The degree of \( \vartheta \) exceeds one if and only if \( \vartheta \) is a bicritical map.
    \item \label{i:th2} The critical set \( C_{\vartheta} \) of \( \vartheta \) is invariant under \( \widetilde{g} \).
    \item \label{i:th4} For bicritical \( \vartheta \), the critical set coincides with both \(\widetilde{Sing}(S) \) and the fixed locus, {\it i.e.}: \( C_{\vartheta} =\widetilde{Sing}(S) = \operatorname{Fix}(\widetilde{g}) \).
    \item \label{i:th5} The critical value set \( V_{\vartheta} \) of \( \vartheta \) remains invariant under \( \widehat{g} \).
    \item \label{i:th6} Let us define \(\widetilde{\operatorname{Sing}}(S_{p,\mathcal{L}}) := \nu^{-1}\bigl(\operatorname{Sing}(S_{p,\mathcal{L}})\bigr)\).  
    Then, \(\widetilde{\operatorname{Sing}}(S_{p,\mathcal{L}})\) is \(\widehat{g}\)-invariant.  
    Moreover, if \(\widetilde{\operatorname{Sing}}(S_{p,\mathcal{L}})\) consists of two points, then  
    \[
    \vartheta(\widetilde{Sing}(S)) \subseteq \widetilde{\operatorname{Sing}}(S_{p,\mathcal{L}}).
    \]
\end{enumerate}
\end{corollary}

\begin{proof}
Proof of (\ref{i:th1}): The statement follows immediately from Part (\ref{i:pp1}) of  Lemma \ref{l:theta}, so we omit the proof.

Proof of (\ref{i:th3}): By Corollary \ref{c:finin}, \( C_{\vartheta} \) contains at most two critical points. By \cite{V}, no rational map is unicritical. Hence, if \( \deg(\vartheta) > 1 \), \( \vartheta \) must admit exactly two critical points, i.e., \( \vartheta \) is bicritical.

Proof of (\ref{i:th2}): Let \( q \in C_{\vartheta} -\widetilde{Sing}(S) \), then  there exists an open neighborhood \( U \) of \( q \) such that \( \vartheta : U - \{ q \} \to \vartheta(U) - \{ \vartheta(q) \} \) is a \( k \)-sheeted covering map for some \( k \). By Lemma 4.4, the relation  
\[
\widehat{g}|_{\vartheta(U) - \{\vartheta(q)\}} \circ \vartheta|_{U - \{ q \}} = \vartheta|_{\widehat{g}(U) - \{\widehat{g}(q)\}} \circ \widetilde{g}|_{U - \{ q \}}
\]  
holds. Since \( \widetilde{g} \) and \( \widehat{g} \) are homeomorphisms and \( \vartheta|_{U - \{ q \}} \) is a covering map, it follows that \( \widetilde{g}(q) \in C_{\vartheta} \). Thus, \( C_{\vartheta} \) is \( \widetilde{g} \)-invariant.

Proof of Part (\ref{i:th4}).
Combining the results from parts (\ref{i:th3}) and (\ref{i:th2}) with the fact that $\widetilde{g}$ is a Möbius transformation of infinite order, we deduce that $C_{\vartheta} = \operatorname{Fix}(\widetilde{g})$. As $\widetilde{Sing}(S)$ contains at most two points, the proof reduces to showing the inclusion $C_{\vartheta} \subset\widetilde{Sing}(S)$.

Consider an arbitrary point $q \in C_{\vartheta} - \psi^{-1}(p)$. We may choose a sufficiently small open neighborhood $U$ of $q$ satisfying:
\begin{itemize}
    \item $\vartheta|_{U-\{q\}} \colon U-\{q\} \to \vartheta(U)-\{\vartheta(q)\}$ forms a $\deg(\vartheta)$-sheeted covering map;
    
    \item Both restricted maps $\psi|_{U-\{q\}} \colon U-\{q\} \to \psi(U)-\{\psi(q)\}$ and $\nu|_{\vartheta(U)-\{\vartheta(q)\}}$ are homeomorphisms;
    
    \item The following diagram commutes:
    \[
    \begin{tikzcd}
    U-\{q\} \arrow[r,"\vartheta"] \arrow[d,"\psi"] & \vartheta(U)-\{\vartheta(q)\} \arrow[d,"\nu"] \\
    \psi(U)-\{\psi(q)\} \arrow[r,"\pi_{p,\mathcal{L}}"] & \nu(\vartheta(U))-\{\nu(\vartheta(q))\}
    \end{tikzcd}
    \]
\end{itemize}

From these properties, we immediately conclude that $\pi_{p,\mathcal{L}}$ induces a $\deg(\vartheta)$-sheeted covering:
\[
\pi_{p,\mathcal{L}} \colon \psi(U)-\{\psi(q)\} \to \pi_{p,\mathcal{L}}(\vartheta(U))-\{\pi_{p,\mathcal{L}}(\vartheta(q))\}.
\]

For any $r \in \pi_{p,\mathcal{L}}(\vartheta(U))-\{\pi_{p,\mathcal{L}}(\vartheta(q))\}$, the line $\ell_r = \langle \langle p,r\rangle \rangle$ intersects $\psi(U)-\{\psi(q)\}$ at exactly $\deg(\vartheta)$ distinct points. This intersection multiplicity forces the osculating line to $S$ at $p$ to have contact order $\geq \deg(\vartheta) + 1$. The assumption $\deg(\vartheta) \geq 2$ guarantees that $\psi(q)$ must be a singular point of $S$, i.e., $q \in\widetilde{Sing}(S)$, completing the proof of this Part.

Proof of (\ref{i:th5}): The invariance of \( V_{\vartheta} \) under \( \widehat{g} \) follows directly from the \( \widetilde{g} \)-invariance of \( C_{\vartheta} \); details are omitted.

Proof of (\ref{i:th6}): The $\widehat{g}$-invariance of $g$ is immediate, so we omit its proof here. To establish the rest of the proof, we consider two cases:  

\textbf{Case 1:} The map \(\vartheta\) is birational.  
By Part~(\ref{i:th4}) of this lemma, we have  
\(
    \widetilde{\operatorname{Sing}}(S) = C_{\vartheta}.
\)  
Furthermore, since both \(V_{\vartheta}\) and \(\widetilde{\operatorname{Sing}}(S_{p,\mathcal{L}})\) are \(\widehat{g}\)-invariant, it follows that  
\(
    V_{\vartheta} = \widetilde{\operatorname{Sing}}(S_{p,\mathcal{L}}).
\)  
Applying Part~(\ref{i:th4}) again, this equality implies  
\[
    \vartheta\bigl(\widetilde{\operatorname{Sing}}(S)\bigr) = \widetilde{\operatorname{Sing}}(S_{p,\mathcal{L}}).
\] 

\textbf{Case 2:} Suppose \(\vartheta \in \operatorname{PGL}(2, \mathbb{C})\). 
By Lemma~\ref{l:theta}, we have the conjugation relation
\(
    \widehat{g}^m \circ \vartheta = \vartheta \circ \widetilde{g}^m.
\)
For any fixed point \(x \in \operatorname{Fix}(\widetilde{g}^m)\), this implies
\(
    \widehat{g}^m(\vartheta(x)) = \vartheta(x),
\)
and consequently \(\vartheta(x) \in \widetilde{\operatorname{Sing}}(S_{p,\mathcal{L}})\). 
Since \(\widetilde{\operatorname{Sing}}(S) \subset \operatorname{Fix}(\widetilde{g})\) by assumption, we obtain the inclusion,
\[
    \vartheta\bigl(\widetilde{\operatorname{Sing}}(S)\bigr) \subset \widetilde{\operatorname{Sing}}(S_{p,\mathcal{L}}).
\]
This completes the proof of this Part.
 \end{proof}

            \section{Parametrization of the exceptional set} \label{s:par}

Having shown that any invariant curve normalizes to \(\mathbb{CP}^1\), we now prove that every invariant rational curve is projectively equivalent to a monomial set. To achieve this,  in Corollaries \ref{c:node}–\ref{c:inflection2} we rule out nodes and force exactly 0 or 2 inflection points; Finally, Lemma \ref{l:cc} and the concluding arguments at the end establish Theorem \ref{t:main1}.

Central to our argument is the following bridging result between geometric invariance and algebraic structure:

\begin{lemma} \label{l:para}  
Let \( g \in \mathrm{PGL}(n+1,\mathbb{C}) \) (for \( n \geq 2 \)) be an element of infinite order, \( S \subset \mathbb{CP}^n \) a non-degenerate irreducible rational curve invariant under \( g \), \( p \in S \) a fixed point of \( g \), and \( \mathcal{L} \subset \mathbb{CP}^n \) a hyperplane avoiding \( p \). Suppose \( S_{p,\mathcal{L}} \) is projectively equivalent to a monomial set. Then there exist:  
\begin{itemize}
    \item A strictly decreasing sequence \( k_1 > \cdots > k_{n-1} \) of natural numbers,
    \item A desingularization \( \psi: \mathbb{CP}^1 \to S \),
    \item Transformations \( m \in \mathrm{PGL}(2,\mathbb{C}) \) and \( h \in \mathrm{PGL}(n+1,\mathbb{C}) \),
\end{itemize}
such that:  
\begin{enumerate}
    \item \( h(p) = [e_{n+1}] \),
    \item \( h(\mathcal{L}) = \langle\langle [e_{1}],\ldots, [e_n] \rangle\rangle \),
    \item \( m^{-1}(\psi^{-1}(p)) \subset \{[0:1], [1:0]\} \),
    \item The following equality holds:  
    \[
    \pi_{[e_{n+1}], \langle \langle [e_1], \ldots, [e_n] \rangle \rangle} \circ h \circ \psi \circ m ([z:w]) =  \left[z^{k_1}: z^{k_2}w^{k_1 - k_2}: \ldots : z^{k_n}w^{k_1 - k_n}: w^{k_1}: 0\right].
    \]
\end{enumerate}  

If \( q \in S - \{p\} \) is another fixed point of \( g \), the same \( \psi, h, m \) may be chosen to additionally satisfy \( h(q) = [e_1] \) and \( m^{-1}(\psi^{-1}(\{p,q\})) \subset \{[0:1], [1:0]\} \).  
\end{lemma}			

\begin{proof} 
If \( S \) is totally unramified, the result follows immediately. Assume instead that \( S \) is not totally unramified. Let \( \psi: \mathbb{CP}^1 \to S \) and \( \nu: \mathbb{CP}^1 \to S_{p,\mathcal{L}} \) be desingularizations. By Lemma \ref{l:theta}, there exist maps \( \vartheta \), \( \widetilde{g} \), and \( \widehat{g} \). After conjugation, we may assume:  
 \( p = [e_{n+1}] \),
 \( \mathcal{L} = \langle \langle [e_1], \ldots, [e_n] \rangle \rangle \),
  a tuple \( \mathbf{k} = (k_1, \ldots, k_n) \in \mathbb{Z}^n \) with \( k_1 > \cdots > k_n \geq 1 \) and \( \gcd(k_1, \ldots, k_n) = 1 \), and such that \[ \nu([z:w]) = \left[z^{k_1}: \cdots : z^{k_n}w^{k_1 - k_n}: w^{k_1}: 0\right]. \] 

We divide the proof into four cases based on the properties of \( S_{p,\mathcal{L}} \) and \( \vartheta \):

\medskip
\noindent\textbf{Case 1:} \( S_{p,\mathcal{L}} \) is non-totally unramified and \( \vartheta \in \mathrm{PSL}(2,\mathbb{C}) \).  
Since \( S_{p,\mathcal{L}} \) is non-totally unramified, \( \widetilde{\mathrm{Sing}}(S_{p,\mathcal{L}}) = \{[0:1], [1:0]\} \). Choose \( m \in \mathrm{PGL}(2,\mathbb{C}) \) such that:  
\[
m[1:0] = \vartheta^{-1}([1:0]), \quad m[0:1] = \vartheta^{-1}([0:1]).
\]
This implies \( \vartheta(m([z:w])) = [az:bw] \) for \( a, b \in \mathbb{C}^* \). Define the diagonal matrix:  
\[
h_1 = \mathrm{Diag}\left(a^{-k_1},\, a^{-k_2}b^{k_2 - k_1},\, \ldots,\, a^{-k_n}b^{k_n - k_1},\, b^{-k_1},\, 1\right).
\]
By Part (\ref{i:th6})  of Corollary \ref{l:theta2}, we know \( \vartheta(\widetilde{\operatorname{Sing}}(S)) \subset \widetilde{\operatorname{Sing}}(S_{p,\mathcal{L}}) \), so we deduce  \( \psi^{-1}(p) \subset \vartheta^{-1}\{[1:0],[0:1]\} \).  Thus, for each  \([z:w] \in \mathbb{CP}^1-\{[1:0],[0:1]\}\), we get:  
\begin{footnotesize}
\[
\begin{array}{ll}
(\pi_{p,\mathcal{L}} \circ h_1  \circ \psi \circ m)([z:w])
&=(\pi_{p,\mathcal{L}} \circ h_1  \circ \psi) (\vartheta^{-1}( [az:bw]))\\
&=\pi_{p,\mathcal{L}} ( h_1(   \pi_{p,\mathcal{L}}^{-1}( \nu ([az:bw]))))\\
&=\pi_{p,\mathcal{L}} ( h_1(   \pi_{p,\mathcal{L}}^{-1}(\left[(az)^{k_1}: \ldots:(az)^{k_n}(bw)^{k_1 - k_n}  :(b w)^{k_1}: 0\right]))))\\
&=\pi_{p,\mathcal{L}} ( h_1(  \left[(az)^{k_1}: \ldots:(az)^{k_n}(bw)^{k_1 - k_n}  :(b w)^{k_1}: r\right]))\\
&=\pi_{p,\mathcal{L}} (  \left[z^{k_1}: z^{k_2}w^{k_1 - k_2}: \ldots : z^{k_n}w^{k_1 - k_n}: w^{k_1}: r\right])\\
&=  \left[z^{k_1}: z^{k_2}w^{k_1 - k_2}: \ldots : z^{k_n}w^{k_1 - k_n}: w^{k_1}: 0\right] \\
\end{array}
\]
\end{footnotesize}

\medskip
\noindent\textbf{Case 2:} \( S_{p,\mathcal{L}} \) is non-totally unramified and \( \vartheta \) is bicritical.   As before, we have 
 \( \widetilde{\mathrm{Sing}}(S_{p,\mathcal{L}}) = \{[0:1], [1:0]\} \). By  Parts (\ref{i:th5}) and  (\ref{i:th6}) of corollary \ref{l:theta2}, we get that \( \vartheta(\widetilde{\operatorname{Sing}}(S)) \subset \widetilde{\operatorname{Sing}}(S_{p,\mathcal{L}}) \) and \( V_\vartheta = \widetilde{\operatorname{Sing}}(S_{p,\mathcal{L}}) \). Choosing \( m \) as in Case 1, we find \( \vartheta \circ m \) has critical points at \([1:0], [0:1]\), so \( \vartheta(m([z:w])) = [az^l:bw^l] \) for some \( l > 1 \). Define \( h_1 \) analogously to Case 1,  so for $[z,w]\in \mathbb{CP}^1- \{[0:1], [1:0]\}$ using similar arguments to the previous case we get:  
\[
\pi_{p,\mathcal{L}} \circ h_1 \circ \psi \circ m ([z:w]) = \left[z^{k_1l}: \cdots : z^{k_nl}w^{k_1l - k_nl}: w^{k_1l}: 0\right].
\]

\medskip
\noindent\textbf{Case 3:} \( S_{p,\mathcal{L}} \) is totally unramified and \( \vartheta \in \mathrm{PSL}(2,\mathbb{C}) \).
Here, \( \mathbf{k} = (n-1, \ldots, 1) \). By Diagram \ref{e:diagram}, \( \mathrm{Card}(\mathrm{Fix}(\widehat{g})) = \mathrm{Card}(\mathrm{Fix}(\widetilde{g})) \). Choose \( m_1, m_2 \in \mathrm{PGL}(2,\mathbb{C}) \) such that \( m_1(\mathrm{Fix}(\widehat{g})) = m_2^{-1}(\mathrm{Fix}(\widetilde{g})) \subset \{[0:1], [1:0]\} \). The composition \( m_1 \circ \vartheta \circ m_2 \) takes one of four forms: $[az: bw]$, 
$[bw: az]$,  $[az+ bw: cw]$, $[az: bz + cw]$; by prior reduction, we need only address \( m_1(\vartheta(m_2([z:w]))) = [az + bw:cw] \). Define:  
\[
h_1 = \begin{bmatrix} \mathbf{h} & 0 \\ 0 & 1  \end{bmatrix},
\]
where \( \mathbf{h} \in \mathrm{SL}(n,\mathbb{C}) \) lifts \( \iota_{n-1}\left(\left(\begin{smallmatrix} a & b \\ 0 & c \end{smallmatrix}\right)^{-1} m_1\right) \).  Once again in this case $\psi(-1)(p)\in \{[1:0],[0:1]\}$, so  for each $[z,w]\in \mathbb{CP}^1-\{[1:0],[0:1]\}$ we derive:  
\[
\begin{array}{ll}
(\pi_{p,\mathcal{L}} \circ h_1  \circ \psi \circ m_2)([z:w])
&=\pi_{p,\mathcal{L}} ( h_1  ( \psi( \vartheta^{-1}(m_1^{-1}( [az+ bw: cw])))))\\
&= \Pi_{p,\mathcal{L}}(h_1) ( \nu (m_1^{-1}( [az+ bw: cw])))\\
&=\left [\iota_{n-1}\left(\left(\begin{smallmatrix} a & b \\ 0 & c \end{smallmatrix}\right)^{-1} m_1\right)(\xi_{\mathbf{n}} (m_1^{-1}( (az+ bw, cw)))):0\right ]\\
&= \left [\xi_{\mathbf{n}}(\left(\begin{smallmatrix} a & b \\ 0 & c \end{smallmatrix}\right)^{-1} m_1m_1^{-1}( (az+ bw: cw)))):0\right ]\\
&=
\left[z^{n}: z^{n-1}w^{1}: \ldots : zw^{n-1}: w^{n}: 0\right] \\
\end{array}
\]

\medskip
\noindent\textbf{Case 4:} \( S_{p,\mathcal{L}} \) is totally unramified and \( \vartheta \) is bicritical.
Similar to Case 3, we have \( \mathbf{k} = (n-1, \ldots, 1) \). Here, \( \mathrm{Fix}(\widehat{g}) = V_\vartheta \) and \( \mathrm{Fix}(\widetilde{g}) = C_\vartheta \). Choosing \( m_1, m_2 \) such that \( m_1(\mathrm{Fix}(\widehat{g})) = m_2^{-1}(\mathrm{Fix}(\widetilde{g})) = \{[1:0], [0:1]\} \), we find \( m_1 \circ \vartheta \circ m_2([z:w]) = [az^l:bw^l] \). Defining \( h_1 \) 
\[
h_1 = \begin{bmatrix} \mathbf{h} & 0 \\ 0 & 1  \end{bmatrix},
\]
where \( \mathbf{h} \in \mathrm{SL}(n,\mathbb{C}) \) lifts \( \iota_{n-1}\left(\left(\begin{smallmatrix} a & 0 \\ 0 & b \end{smallmatrix}\right)^{-1} m_1\right) \). As in the previous case,  a straightforward computation shows that for each $[z,w]\in\mathbb{CP}^1-\{[1:0],[0:1]\}$  we have:

\[
\pi_{p,\mathcal{L}} \circ h_1 \circ \psi \circ m_2 ([z:w]) = \left[z^{ln}: \cdots : w^{ln}: 0\right].
\]

\medskip
\noindent In all cases, the required transformations \( \psi, h, m \) exist. If \( q \in S - \{p\} \) is another fixed point, analogous choices ensure \( h(q) = [e_1] \) and \( m^{-1}(\psi^{-1}(\{p,q\})) \subset \{[0:1], [1:0]\} \).
\end{proof}

With this parametrization machinery in place, we first eliminate nodal singularities:

\begin{corollary}[Nodal Exclusion] \label{c:node}
Let \( g \in \mathrm{PGL}(n+1,\mathbb{C}) \) with \( n \geq 2 \) be an element of infinite order, and let \( S \subset \mathbb{CP}^n \) be a non-degenerate, irreducible rational algebraic curve invariant under \( g \). Suppose \( p \in S \) is a fixed point of \( g \), and let \( \mathcal{L} \subset \mathbb{CP}^n \) be a hyperplane not containing \( p \). If the projection \( S_{p,\mathcal{L}} \) is projectively equivalent to a monomial set, then \( S \)  cannot have a node.
\end{corollary}

\begin{proof}
On the contrary, let us assume that $S$ contains a node; thus, by Corollary \ref{c:finin}, this should be the fixed point $p$.   On the other hand,   by Lemma~\ref{l:para}, we may assume that  \( p = [e_{n+1}] \) and $\mathcal{L}=\langle\langle [e_1],\ldots[e_n]\rangle\rangle$. The lemma further guarantees the existence of homogeneous polynomials \( P(z,w), Q(z,w) \in \mathbb{C}[z,w] \), a sequence of natural numbers \( k_1 > k_2 > \ldots > k_{n-1} \), and a desingularization \( \psi: \mathbb{CP}^1 \to S \) satisfying \( \psi^{-1}(p) = \{[0,1], [1,0]\} \) and
\begin{scriptsize}
    \[
        \psi[z,w] = \left[ z^{k_1}P(z,w):\, z^{k_2}w^{k_1 - k_2}P(z,w):\, \ldots:\, z^{k_{n-1}}w^{k_1 - k_{n-1}}P(z,w):\, w^{k_1}P(z,w):\, Q(z,w) \right].
    \]
    \end{scriptsize}
Observe the invariance of the set  \( \widetilde{Sing}(S) \) and \( \psi^{-1}(p) = \{[0,1], [1,0]\} \) forces  \( P(z,w) = az^kw^l \) for constants \( a \in \mathbb{C}^* \) and \( k, l \in \mathbb{N} \cup \{0\} \).  

    To analyze \( \psi \) locally, consider the standard affine charts \( U_z = \{[z:1]\} \) and \( U_w = \{[1:w]\} \) on \( \mathbb{CP}^1 \). Restricting \( \psi \) to these charts yields lifted maps:
    \[
        \begin{aligned}
            A(z) &= \left( az^{k + k_1},\, az^{k + k_2},\, \ldots,\, az^{k + k_{n-1}},\, az^k,\, r(z) \right), \\
            B(w) &= \left( aw^l,\, aw^{k_1 - k_2 + l},\, \ldots,\, aw^{k_1 - k_{n-1} + l},\, aw^{k_1 + l},\, s(w) \right),
        \end{aligned}
    \]
    where \( r(z) = Q(z,1) \) and \( s(w) = Q(1,w) \). The non-vanishing conditions \( r(0) \neq 0 \neq s(0) \) ensure regularity at \( z=0 \) and \( w=0 \). Thus, a  direct computation of the ramification indexes  at $[1:0]$ and $[0:1]$ is possible,  and gives:
    \[
        \begin{array}{l}
            s^{[1,0]}_0 = k - 1,\\  
            s^{[1,0]}_1 = k_{n-1} - 1, \\
            s^{[1,0]}_j = k_{n-j} - k_{n-j+1} - 1 \quad (2 \leq j \leq n-1); \\
            s^{[0,1]}_0 = l - 1, \\ 
            s^{[0,1]}_j = k_j - k_{j+1} - 1 \quad (1 \leq j \leq n-2),\\ 
            s^{[0,1]}_{n-1} = k_{n-1} - 1.
        \end{array}
    \]
    Since  \( S \) admits  a node at \( p \), then \( S - \{p\} \)  lacks of  inflection points. This imposes  the following constraints on the global ramification indexes:
    \[
        \begin{aligned}
            s_0 &= k + l - 2, \\
            s_1 &= k_{n-1} + k_1 - k_2 - 2 = s_{n-1}, \\
            s_j &= k_j - k_{j+1} + k_{n-j} - k_{n-j+1} - 2 \quad (2 \leq j \leq n-2).
        \end{aligned}
    \]

Next, we claim:

    \textbf{Claim.} \( k_1 + k + l = 0 \).  
    For \( n = 2, 3 \), this holds trivially. Assume \( n \geq 4 \). Substituting \( j = n-1 \) into Equation~\eqref{e:plud} gives:
    \begin{equation} \label{e:son}
        -(k_1 + k + l) + n^2 - 3n + n(k_{n-1} - k_2) + \sum^{n-2}_{i=2} (n - i)s_i = 0.
    \end{equation}
    Substituting \( s_i = k_i - k_{i+1} + k_{n-i} - k_{n-i+1} - 2 \) into the summation, we compute:
    \[
        \sum^{n-2}_{i=2} (n - i)s_i = -(n^2 - 3n) + 2n(k_2 - k_{n-1}) - \sum^{n-2}_{i=2} i\left(k_i - k_{i+1} + k_{n-i} - k_{n-i+1}\right).
    \]
    Simplifying the right-hand summation through index reparameterization reveals telescoping cancellations:
    \[
        \sum^{n-2}_{i=2} i\left(k_i - k_{i+1} + k_{n-i} - k_{n-i+1}\right) = (n - 1)(k_2 - k_{n-1}) + \sum^{n-3}_{i=1} (k_{i+1} - k_{n-i}),
    \]
    which reduces~\eqref{e:son} to \( k_1 + k + l = 0 \). 
    
    Since \( k_1, k, l \) are non-negative integers,  the previous claim forces \( k_1 = k = l = 0 \), whic is  a contradiction. Thus, \( S \) cannot have a node. 
\end{proof}
We next constrain the possible inflection configurations:

\begin{corollary}[Inflection set] \label{c:inflection}
Let \( g \in \mathrm{PGL}(n+1,\mathbb{C}) \) with \( n \geq 2 \) be an element of infinite order, and let \( S \subset \mathbb{CP}^n \) be a non-degenerate, irreducible rational algebraic curve invariant under \( g \). Suppose \( p \in S \) is an inflection point which is also fixed point by \( g \), and let \( \mathcal{L} \subset \mathbb{CP}^n \) be a hyperplane not containing \( p \). If the projection \( S_{p,\mathcal{L}} \) is projectively equivalent to a monomial set, then  the number of inflection in \( S \) is  2.
\end{corollary}

\begin{proof}
    Suppose, for contradiction, that \( p \) in the unique   inflection point for \( S \). 

As in  the proof of  Corollary~\ref{c:node}, by  Lemma~\ref{l:para} we can assume that   \( p = [e_{n+1}] \), $\mathcal{L}=\langle \langle [e_1],\ldots,[e_n]\rangle\rangle$,  and there exist: homogeneous polynomials \( P(z,w), Q(z,w) \in \mathbb{C}[z,w] \), a sequence of natural numbers \( k_1 > k_2 > \ldots > k_{n-1} \), and a desingularization \( \psi: \mathbb{CP}^1 \to S \) satisfying \( \psi^{-1}(p) = \{[0,1], [1,0]\} \) and
\begin{scriptsize}
    \[
        \psi[z,w] = \left[ z^{k_1}P(z,w):\, z^{k_2}w^{k_1 - k_2}P(z,w):\, \ldots:\, z^{k_{n-1}}w^{k_1 - k_{n-1}}P(z,w):\, w^{k_1}P(z,w):\, Q(z,w) \right].
    \]
    \end{scriptsize}
Since      $S$ does not contain nodes (see Corollary \ref{c:node})  and    \( \psi^{-1}(p) \subseteq \{[0:1], [1:0]\} \) we have either \( P(z, w)= az^k \) or \(  P(z, w)= bw^l\). 
To complete the proof, we split into two exhaustive cases.

\medskip

\noindent\textbf{Case 1.}  Suppose 
\[
P(z,w)=a\,z^k.
\] 
Then 
\[
\psi[z:w]
=\bigl[a\,z^{k_1+k}:a\,z^{k_2+k}w^{\,k_1-k_2}:\cdots:
a\,z^{k_{n-1}+k}w^{\,k_1-k_{n-1}}:a\,w^{k_1}z^k:Q(z,w)\bigr].
\]
On the affine chart \(\{z\neq0\}\) we set \(w=1\); the lift becomes
\[
\widetilde\psi(z)
=\bigl(a\,z^{k+k_1},\,a\,z^{k+k_2},\,\dots,\,a\,z^{k+k_{n-1}},\,a\,z^k,\;r(z)\bigr),
\quad r(z)=Q(z,1).
\]
Since \(S\) has exactly one inflection point, the orders of contact \(s_j\) at that point satisfy
\[
\begin{aligned}
s_0 &= (k)-1,\\
s_1 &= (k_{n-1})-1,\\
s_j &=\bigl(k_{n-j}-k_{n-j+1}\bigr)-1
\quad(2\le j\le n-1).
\end{aligned}
\]
Substituting these into Plücker’s relation (Lemma \ref{l:1.2}) gives 
\[
r_{n-1}=n.
\]
Hence the dual curve \(S^\vee\subset\mathbb{CP}^n\) is the rational normal curve (see \cite[p.\,263]{GH}).  By duality \(S^{\vee\vee}\) is also rational normal, and then the Reflexivity Theorem (Theorem 15.24 in \cite{H}) forces $S=S^{\vee\vee},$ contradicting the fact that \(S\) has an inflection point.

\medskip

\noindent\textbf{Case 2.}  Suppose 
\[
P(z,w)=b\,w^l.
\]
An analogous argument—interchanging the roles of  $z$ and $w$—applies verbatim. We therefore omit the details.

\medskip

This completes the proof.

\end{proof}

\begin{corollary} \label{c:inflection2}
    Let $S \subset \mathbb{CP}^n$ be an irreducible algebraic curve with a desingularization $\psi: \P^1 \rightarrow \mathbb{CP}^n$ given explicitly by the morphism:
    $$
    \left[z^{k_1+k}: z^{k_2+k}w^{k_1-k_2}P(z,w): \ldots:\ z^{k_{n-1}+k}w^{k_1-k_{n-1}}:\ z^k w^{k_1},\ Q(z,w)\right]
    $$
    where $Q(z,w) \in \C[z,w]$ is a homogeneous polynomial of degree $k_1 + k$ satisfying $Q(0,1) \neq 0$ and $Q(1,0) = 0$. Assume $[1:0]$ and $[0:1]$ are the only W-points of $\psi$. Then there exist $b \in \C^*$ and $v \in \C^{n-1}$ such that $h \circ \psi = \xi_{\mathbf{k}}$, where $\mathbf{k} = (k_1 + k, \ldots, k_{n-1} + k, k)$ and 
    $$
    h = 
    \begin{bmatrix}
        1 & 0 & 0 \\
        0 & I_{n-1} & 0 \\
        0 & v & b
    \end{bmatrix}.
    $$
\end{corollary}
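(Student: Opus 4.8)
The plan is to pass to the affine chart $w=1$, translate the hypothesis on $W$-points into the statement that a single Wronskian is a monomial in the affine parameter, and then read off the shape of $Q$ from that. First I would set $t=z$, $w=1$, so that $\psi$ is locally lifted to $F(t)=(t^{a_1},\dots,t^{a_n},q(t))$, where $(a_1,\dots,a_n)=(k_1+k,\dots,k_{n-1}+k,k)$ and $q(t)=Q(t,1)$. The hypotheses on $Q$ become $q(0)=Q(0,1)\neq 0$ and $\deg q<k_1+k$ (because $Q(1,0)=0$); moreover $a_1>\dots>a_{n-1}>a_n=k\geq 1$, so the exponents are pairwise distinct and none of them is $0$. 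This chart omits only $[1,0]$ and sends $[0,1]$ to $t=0$, so the assumption that the $W$-points lie in $\{[0,1],[1,0]\}$ says precisely that the Wronskian $\Omega(t)=\det\bigl(F,F',\dots,F^{(n)}\bigr)$ has no zero on $\C^{*}$. Since $\Omega$ is a polynomial and the curve is non-degenerate (so $\Omega\not\equiv 0$), this forces $\Omega(t)=c\,t^{m}$ for some $c\in\C^{*}$, $m\in\N$.

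The heart of the argument is to expand $\Omega$ using its linearity in the last column $(q,q',\dots,q^{(n)})^{T}$. Writing $q(t)=\sum_{\mu}d_\mu t^{\mu}$, multilinearity gives $\Omega=\sum_{\mu}d_\mu\,\Omega_\mu$, where $\Omega_\mu$ is obtained by replacing $q$ with the single monomial $t^{\mu}$. But $\Omega_\mu$ is exactly the Wronskian of the $n+1$ monomials $t^{a_1},\dots,t^{a_n},t^{\mu}$, which the generalized Vandermonde identity evaluates as
\[
\Omega_\mu(t)=\Bigl(\textstyle\prod_{i<j}(a_j-a_i)\Bigr)\Bigl(\textstyle\prod_{j=1}^{n}(\mu-a_j)\Bigr)\,t^{B+\mu},
\qquad B=\Bigl(\textstyle\sum_{j}a_j\Bigr)-\tbinom{n+1}{2}.
\]
Hence $\Omega(t)=\kappa\sum_{\mu}d_\mu\,P(\mu)\,t^{B+\mu}$ with $\kappa=\prod_{i<j}(a_j-a_i)\neq 0$ and $P(\mu)=\prod_{j=1}^{n}(\mu-a_j)$, a degree-$n$ polynomial whose zero set is exactly $\{a_1,\dots,a_n\}$.

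It then remains to read off the constraints and build $h$. Since the powers $t^{B+\mu}$ are distinct and $\Omega$ is a single monomial, every $\mu$ with $P(\mu)\neq 0$ must have $d_\mu=0$, with at most one exception providing the surviving power; as $d_0=Q(0,1)\neq 0$ and $0\notin\{a_j\}$, that exception is $\mu=0$, so $d_\mu=0$ for all $\mu\notin\{a_1,\dots,a_n\}$. Thus $q$, and therefore $Q$, is supported only on $\{0\}\cup\{a_j\}$; since the $t^{a_1}$ coefficient equals $Q(1,0)=0$, this yields
\[
Q(z,w)=d_0\,w^{k_1+k}+d_{k}\,z^{k}w^{k_1}+\sum_{j=2}^{n-1}d_{k_j+k}\,z^{k_j+k}w^{k_1-k_j},\qquad d_0=Q(0,1)\neq 0 .
\]
Finally I would take $b=d_0^{-1}$ and choose $v\in\C^{n-1}$ so that $v_i$ cancels the coefficient of the $i$-th middle monomial in $b\,Q$; the displayed matrix $h$ then fixes the first $n$ coordinates of $\psi$ and sends the last one to $b\,Q+\sum_i v_i(\cdot)=w^{k_1+k}$, whence $h\circ\psi=\xi_{l}$ with $l=(k_1+k,\dots,k_{n-1}+k,k)$.

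The step I expect to be the main obstacle is the identity $P(\mu)=\prod_j(\mu-a_j)$, that is, recognising $\Omega$ as linear in $q$ and evaluating the monomial Wronskians by Vandermonde; once this is in hand the vanishing of every intermediate coefficient of $Q$ is immediate, and the construction of $b$ and $v$ is a one-line linear computation.
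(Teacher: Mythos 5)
Your argument is correct, but it follows a genuinely different route from the paper's. The paper works with the local ramification data at the two distinguished points: it first absorbs into $h$ every monomial of $Q(1,w)$ whose exponent collides with one of $k_1-k_2,\ldots,k_1-k_{n-1},k_1$, then computes the indices $s_i$ at $[1,0]$ and $[0,1]$ and feeds them into the Plücker relations (Equation \ref{e:plud}) for $j=n-2,n-1$; the resulting identity collapses to $l_1=k+k_1$, forcing the corrected $Q$ to be a multiple of $w^{k_1+k}$. You instead read the hypothesis off the top Wronskian: in the chart $w=1$ the $W$-points are the zeros of $\Omega=\det(F,F',\ldots,F^{(n)})$ of the reduced polynomial lift, so $\Omega$ must be a monomial, and expanding $\Omega$ by multilinearity in the column of $q$ together with the Wronskian-of-monomials (generalized Vandermonde) identity shows in one stroke that $q$ can only be supported on $\{0\}\cup\{a_1,\ldots,a_n\}$; the resonant terms are then swallowed by $v$ and the constant term normalized by $b$. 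Your approach is more elementary and self-contained --- it uses only the $n$-th associated map, needs no computation of the degrees $r_{n-2},r_{n-1}$, and identifies all admissible monomials of $Q$ simultaneously rather than after a preliminary normalization --- while the paper's route has the advantage of running on the same Plücker machinery as the adjacent Corollaries \ref{c:node} and \ref{c:inflection}. One point you should make explicit: you need $k\geq 1$, so that $0\notin\{a_1,\ldots,a_n\}$ and hence $P(0)\neq 0$, which is what singles out $\mu=0$ as the unique surviving exponent; this is implicit in the statement because $\xi_l$ with $l_n=k$ requires $k>0$, but it deserves a sentence.
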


\begin{proof}
    The morphism $\psi$ lifts to local expressions in the affine charts:
    \[
    \begin{array}{ll}
        \text{Chart } \{(z,1)\}: & \left( z^{k+k_1}, z^{k+k_2}, \ldots, z^{k+k_{n-1}}, z^k, r(z) \right), \\[0.5em]
        \text{Chart } \{(1,w)\}: & \left( 1, w^{k_1 - k_2}, \ldots, w^{k_1 - k_{n-1}}, w^{k_1}, s(w) \right),
    \end{array}
    \]
    where $r(z) = Q(z,1)$ and $s(w) = Q(1,w)$. The condition $Q(1,0) = 0$ implies $s(0) = 0$, so $s(w)$ expands as a sum $s(w) = \sum_{j=1}^m a_{l_j} w^{l_j}$ with strictly increasing exponents $1 \leq l_1 < \cdots < l_m = k + k_1$. 

    Consider the transformation $h \circ \psi$ with 
    $$
    h = 
    \begin{bmatrix}
        1 & 0 & 0 \\
        0 & I_{n-1} & 0 \\
        0 & v & 1
    \end{bmatrix}.
    $$
    By suitably choosing $v \in \C^{n-1}$, we may eliminate all terms in $s(w)$ whose exponents $l_j$ lie in the set $\{k_1 - k_2, \ldots, k_1 - k_{n-1}, k_1\}$. Let $\{\alpha_i\}_{i=1}^n$ be the ordered sequence ($\alpha_1 < \cdots < \alpha_n$) formed by the exponents $\{l_1, k_1 - k_2, \ldots, k_1 - k_{n-1}, k_1\}$, and define $\beta_j = \alpha_j + k + k_{j-1}$ for $1 \leq j \leq n$ (with $k_0 = 0$). Direct computation yields the relations:
    $$
    s_i = \beta_{i+1} - \beta_i - 2 \quad \text{for } 1 \leq i \leq n - 1.
    $$
    
    Substituting $j = n-2$ and $j = n-1$ into Equation \ref{e:plud}, we derive:
    \[
    \begin{aligned}
        r_{n-1} &= n(k + k_1 - n + 1) - \sum_{i=0}^{n-2} (n - 1 - i)s_i, \\
        r_{n-2} &= (n - 1)(k + k_1 - n + 2) - \sum_{i=0}^{n-3} (n - 2 - i)s_i.
    \end{aligned}
    \]
    The identity $r_{n-2} - 2r_{n-1} = -2 - s_{n-1}$ simplifies to:
    \begin{equation}\label{e:pluc3}
        -(k_1 + k) - n(n + 1) + n \max\{k_1, l_1\} - \sum_{i=1}^{n-1} \sum_{j=i}^{n-1} s_j = 0.
    \end{equation}
Expanding the double sum, we compute:
$$
    \sum_{i=1}^{n-1} \sum_{j=i}^{n-1} s_j = (n - 1)(k + k_1 + \max\{k_1, l_1\}) - n(n - 1) - l_1 - n(k + k_1).
    $$
    Substituting this into Equation \ref{e:pluc3} forces $l_1 = k + k_1$, completing the proof.
\end{proof}

Synthesizing these constraints yields our main classification:

\begin{lemma}[Monomial Equivalence] \label{l:cc}
    Let \( g \in \mathrm{PGL}(n+1, \mathbb{C}) \) be an element of infinite order, and let \( S \subset \mathbb{CP}^n \) be a non-degenerate irreducible rational complex algebraic curve invariant under \( g \). Then, \( S \) is projectively equivalent to a monomial set.
\end{lemma}
\begin{proof}
We proceed by induction on \( n \). 
For the base case \(n=2 \), first suppose that \( S \) is the Veronese curve. In this case, the result follows immediately. If \( S \) is not the Veronese curve, let $p\in S$ be a singular point and $\mathcal{L}$ a line not containing $p$, then in this case by Bézout's Lemma we deduce $S_{p,\mathcal{L}}$ is a line, by Corollaries \ref{c:node} and \ref{c:inflection} we deduce that $S$ has exactly two  inflection points and no nodes.

Now, Applying Lemma \ref{l:para}, we may assume \( p = [e_3] \), \( \mathcal{L} = \langle [e_1], [e_2] \rangle \), and that the inflection points of \( S \) are \( \{[e_1], [e_3]\} \). This lemma also ensures a desingularization \( \psi \colon \mathbb{CP}^1 \to S \) satisfying \( \psi^{-1}(\{[e_1], [e_3]\}) = \{[1:0], [0:1]\} \), with \( \psi \) explicitly given by:
\[
\psi([z,w]) = \left[z^kP(z,w):\, w^kP(z,w):\, Q(z,w)\right],
\]
 where  \( k \in \mathbb{N} \),  \( P(z,w), Q(z,w) \in \mathbb{C}[z,w] \), are  homogeneous polynomials. As in Corollary \ref{c:inflection}  the preimage  condition \( \psi^{-1}(\{[e_1], [e_3]\}) = \{[1:0], [0:1]\} \) forces \( P(z,w) \) to be a monomial, either \( az^{k_1} \) or \( aw^{k_1} \), for some \( a \in \mathbb{C} \). To conclude this part of the proof,  observe that applying Corollary \ref{c:inflection2} completes the argument for \(n=2 \).

\medskip

The inductive step follows analogously, and we omit its proof for brevity.
\end{proof}

				{\bf Proof of Theorem \ref{t:main1}} This follows easily from theorem Lemma \ref{l:cc}. $\square$ \\

\section{Projective Automorphism Groups of Monomial Curves  } \label{s:examples}

In this section, we characterize the projective automorphism groups of algebraic curves parametrized by monomial curves. Our approach proceeds through a sequence of technical lemmas, culminating in our main theorem of classification. We begin by establishing key properties of curve projections.

\begin{lemma}

Let \( \mathbf{k} = (k_1, \dots, k_n) \in \mathbb{N}^n \) satisfying \(k_1 > \cdots > k_{n} > 0\). If \( S^{(\mathbf{k})} \subset \mathbb{CP}^n \) denotes the image of the monomial curve \( \xi_{\mathbf{k}} \), and  we define the hyperplanes
\[
\mathcal{L}_1 = \langle \langle [e_2], \dots, [e_{n+1}] \rangle\rangle \quad \text{and} \quad \mathcal{L}_2 = \langle\langle [e_1], \dots, [e_n] \rangle\rangle.
\]
Then the curve projections \( S^{(\mathbf{k})}_{[e_1],\mathcal{L}_1} \) and \( S^{(\mathbf{k})}_{[e_{n+1}],\mathcal{L}_2} \) can each be parametrized through monomial curves.
\end{lemma}

\begin{proof}
Let \([z, w] \in \mathbb{CP}^1 - \{[1: 0], [0: 1]\}\). The projections are explicitly computed as:
\begin{small}
\[
\begin{aligned}
\pi_{[e_1], \mathcal{L}_1}\big(\xi_{\mathbf{k}}([z: w])\big) &= \left[ z^{k_2}: \, z^{k_3}w^{k_2 - k_3}: \, \dots: \, z^{k_n}w^{k_2 - k_n}: \, w^{k_2} \right] = \xi_{(k_2, \dots, k_n)}([z: w]), \\
\pi_{[e_{n+1}], \mathcal{L}_2}\big(\xi_{\mathbf{k}}([z: w])\big) &= \left[ z^{k_1 - k_n}: \, z^{k_2 - k_n}w^{k_1 - k_2}: \, \dots:\, w^{k_1 - k_n} \right] = \xi_{(k_1 - k_n, \dots, k_{n-1} - k_n)}([z: w]).
\end{aligned}
\]
\end{small}
These equalities demonstrate that both projections correspond to monomial curves in their respective hyperplanes, thereby concluding the proof.\end{proof}

\begin{definition}
Let \(\mathbf{k} = (k_1, \dots, k_n) \in \mathbb{N}^n\) satisfy \(k_1 > \cdots > k_{n} > 0\) and set  \(S^{(\mathbf{k})} \subset \mathbb{CP}^n\) to be the image of the monomial curve \(\xi_{\mathbf{k}}\). We define the stabilizer group of \(S^{(\mathbf{k})}\) as:
\[
\mathcal{V}_{\mathbf{k}} = \left\{ g \in \mathrm{PSL}(n+1, \mathbb{C}) \mid g \cdot S^{(\mathbf{k})} = S^{(\mathbf{k})} \right\},
\]
where \(\mathrm{PSL}(n+1, \mathbb{C})\) acts on \(\mathbb{CP}^n\) via its standard projective linear action.
\end{definition}

\begin{lemma}
Let \( \mathbf{k} = (k_1, \dots, k_n) \in \mathbb{N}^n \) with \( k_1 > n \) and \( \gcd(k_1, \dots, k_n) = 1 \). Then  the group \( \mathrm{Isot}(\mathcal{V}_{\mathbf{k}}, [e_1]) \)—that is, the subgroup of \( \mathcal{V}_{\mathbf{k}} \) fixing \( [e_1] \)—agrees with the group \( \mathrm{Isot}(\mathcal{V}_{\mathbf{k}}, [e_{n+1}]) \).
\end{lemma}
\begin{proof}
The proof is straightforward and is therefore omitted.
\end{proof}

\begin{lemma}\label{l:cc2}  
Let \( \mathbf{k} = (k_1, \dots, k_n) \in \mathbb{N}^n \) satisfying \( k_1 > n \) and \(\gcd(k_1, \dots, k_n) = 1\). Then every element \( g \in \mathrm{Isot}(\mathcal{V}_{\mathbf{k}}, [e_1]) \) (the subgroup of \( \mathcal{V}_{\mathbf{k}} \) fixing \( [e_1] \)) admits a diagonal lift in \( \mathrm{GL}(n+1, \mathbb{C}) \).
\end{lemma}

\begin{proof}  
We proceed by induction on \( n \).  

Let us show the base case (\( n = 2 \)).  
A direct computation shows that the tangent line to the rational curve \( \xi_{\mathbf{k}}(\mathbb{CP}^1) \) at \( [e_3] \) is uniquely \( \langle\!\langle [e_2], [e_3] \rangle\!\rangle \), and at \( [e_1] \) it is \( \langle\!\langle [e_1], [e_2] \rangle\!\rangle \). Since \( \mathrm{Isot}(\mathcal{V}_{\mathbf{k}}, [e_1]) \) acts on \( \mathbb{CP}^2 \) by biholomorphisms, it must preserve these tangent lines, and in consequence $[e_3]$. Consequently, the subspaces \( \langle\!\langle e_2, e_3 \rangle\!\rangle \) and \( \langle\!\langle e_1, e_3 \rangle\!\rangle \) are \( \mathrm{Isot}(\mathcal{V}_{\mathbf{k}}, [e_1]) \)-invariant. This invariance forces \( [e_2] \) to be fixed by the entire isotropy subgroup.  

To show the inductive step,  
assume the result holds for \( n-1 \). Let us define \( \mathcal{L}_1 = \langle\!\langle [e_2], \dots, [e_{n+1}] \rangle\!\rangle \) and \( \mathcal{L}_2 = \langle\!\langle [e_1], \dots, [e_n] \rangle\!\rangle \). We analyze three cases:  

\textbf{Case 1:} \( S_{[e_1], \mathcal{L}_1}^{(\mathbf{k})} \) is the normal rational curve, while \( S_{[e_{n+2}], \mathcal{L}_2}^{(\mathbf{k})} \) is not projectively equivalent to the  normal rational curve.  
By the inductive hypothesis, any \( g \in \mathrm{Isot}(\mathcal{V}_{\mathbf{k}}, [e_1]) \) has a block form:  
\[
g = 
\begin{pmatrix}
a_1 & 0 & 0 \\
0 & D & 0 \\
0 & u & a_2
\end{pmatrix}, \quad \text{where } D \in \mathrm{GL}(n-1, \mathbb{C}) \text{ is diagonal, } a_1, a_2 \in \mathbb{C}^*, \text{ and } u \in \mathbb{C}^{n-1}.
\]  
Since \( h \) fixes \( [e_{n+1}] \) and the embedding \( \iota_{n-1} \) is \( \mathrm{PGL}(2, \mathbb{C}) \)-equivariant, there exist \( x, y \in \mathbb{C} \) such that  
\[
\iota_{n-1}\left( \begin{pmatrix} x & 0 \\ y & x^{-1} \end{pmatrix} \right)  = \begin{pmatrix} D & 0 \\ u & a_2 \end{pmatrix}.
\]  
Examining the \((2,1)\)-entry of the matrix equation (from Formula \eqref{e:alex}), we find \( y = 0 \). This implies \( u = 0 \), rendering \( g \) diagonal.  

\textbf{Case 2:} Both \(  S_{[e_1], \mathcal{L}_1}^{(\mathbf{k})}  \) and \( S_{[e_{n+2}], \mathcal{L}_2}^{(\mathbf{k})}  \) are normal rational curves.  
Here, \( g \in \mathrm{Isot}(\mathcal{V}_{\mathbf{k}}, [e_1]) \) has the form:  
\[
g = 
\begin{pmatrix}
a_1 & v & 0 \\
0 & D & 0 \\
0 & u & a_2
\end{pmatrix}, \quad \text{with } v \in \mathbb{C}^{n-1}, \, D \in \mathrm{GL}(n-1, \mathbb{C}).
\]  
By equivariance of \( \iota_{n-1} \), there exist \( x, y, \tilde{x}, \tilde{y} \in \mathbb{C} \) such that:  
\[
\iota_{n-1}\left( \begin{pmatrix} x & 0 \\ y & x^{-1} \end{pmatrix} \right) = \begin{pmatrix} D & 0 \\ u & a_2 \end{pmatrix}, \quad 
\iota_{n-1}\left( \begin{pmatrix} \tilde{x} & \tilde{y} \\ 0 & \tilde{x}^{-1} \end{pmatrix} \right) = \begin{pmatrix} a_1 & v \\ 0 & D \end{pmatrix}.
\]  
From Formula \eqref{e:alex}, the \((2,1)\)-entry of the first equation forces \( y = 0 \), hence \( u = 0 \). Similarly, the \((2,n)\)-entry of the second equation implies \( \tilde{y} = 0 \), giving \( v = 0 \). Thus, \( D \) must be diagonal, and \( g \) is diagonal.  

\textbf{Case 3:} Neither \(  S_{[e_1], \mathcal{L}_1}^{(\mathbf{k})}  \) nor \(  S_{[e_{n+2}], \mathcal{L}_2}^{(\mathbf{k})}  \) is projectively equivalent to the normal rational curve.  
The argument combines methods from Cases 1 and 2: invariance of subspaces under the isotropy action and constraints from the equivariance of \( \iota_{n-1} \) similarly force \( u = 0 \), \( v = 0 \), and diagonality of \( D \). We omit repetitive details.  

In all cases, \( g \) admits a diagonal lift, completing the induction.  
\end{proof}

 With these projection properties established, we now formalize the symmetry notion for elements in $\mathbb{N}^n$ central to our investigation:

\begin{definition}\label{def:symmetric_tuple}
A tuple \(\mathbf{k} = (k_1, \ldots, k_n) \in \mathbb{Z}^n\) is called \emph{symmetric} if it satisfies:
\begin{itemize}
    \item The entries are strictly decreasing positive integers with no common divisor, that is:
    \[
    k_1 > k_2 > \cdots > k_n \geq 1 \quad \text{and} \quad \gcd(k_1, \ldots, k_n) = 1.
    \]
    
    \item  The differences between consecutive entries are symmetric. Specifically, for all \(0 \leq j \leq n-2\),
    \[
    k_{n-j} - k_{n-j+1} = k_{j+1} - k_{j+2},
    \]
    where we define \(k_{n+1} = 0\) to extend the tuple. 
\end{itemize}
\end{definition}

\begin{theorem}\label{c:aut}
Let \(\mathbf{k} = (k_1, \dots, k_n) \in \mathbb{N}^n\) satisfying     \(k_1 > k_2 > \cdots > k_n \geq 1\),  \(k_1 > n\) and \(\gcd(k_1, \dots, k_n) = 1\). Then:
\begin{enumerate}
    \item If \(\mathbf{k}\) is symmetric, the automorphism group \(\mathcal{V}_{\mathbf{k}}\) is generated by the holomorphic automorphism group \(\mathcal{H}_{\mathbf{k}}\) and the equivalence class of the anti-diagonal matrix \(J_n\) ({\it i.e.} an \((n+1)\times(n+1)\) matrix with 1s on the anti-diagonal), in other words:
    \[
    \mathcal{V}_{\mathbf{k}} = \left\langle \mathcal{H}_{\mathbf{k}} ,[J_n] \right\rangle.
    \]
    
    \item If \(\mathbf{k}\) is not symmetric, then \(\mathcal{V}_{\mathbf{k}}=\mathcal{H}_{\mathbf{k}} \).
\end{enumerate}
\end{theorem}

\begin{proof}
We first analyze the isotropy group \(\operatorname{Isot}([e_1], \mathcal{V}_{\mathbf{k}})\).

\textbf{Claim :} \(\operatorname{Isot}([e_1],\mathcal{V}_{\mathbf{k}})=\mathcal{H}_{\mathbf{k}}\).  
   By Lemma 3.4, any \(g \in \operatorname{Isot}([e_1], \operatorname{V}_{\mathbf{k}})\) is represented by a diagonal matrix \([\operatorname{diag}(\rho_1, \ldots, \rho_{n+1})]\). The invariance of the monomial curve \(\xi_{\mathbf{k}}(\mathbb{CP}^1)\) under \(g\) implies that for each \([z, w] \in \mathbb{CP}^1\), there exists \([a, b] \in \mathbb{CP}^1\) such that:
    \[
    \rho_j u^{k_j} = v^{k_j} \rho_{n+1} \quad \text{for } 1 \leq j \leq n,
    \]
    where \(wu = z\) and \(vb = a\). Algebraic manipulation  yields:
    \[
    \rho_{n+1}^{k_1 - k_j} \rho_1^{k_j} = \rho_j^{k_1} \quad \text{for all } j.
    \]
    Let \(\rho_1 = \alpha^{k_1}\) and \(\rho_{n+1} = \beta^{k_1}\) for \(\alpha, \beta \in \mathbb{C}^*\). Substituting these into the equations gives:
    \[
    g = \left[\operatorname{diag}\left(\alpha^{k_1}, \mu_2 \alpha^{k_2}\beta^{k_1 - k_2}, \ldots, \mu_n \alpha^{k_n}\beta^{k_1 - k_n}, \beta^{k_1}\right)\right],
    \]
    where \(\mu_j\) are \(k_1\)-th roots of unity. Setting \(z = w = 1\) simplifies the relations, forcing \(\mu_j = 1\) (by coprimality). Thus, \(g\) is determined by \(\alpha\) and \(\beta\), proving the claim.

    Finally, the inflection set of the monomial curve \(\xi_{\mathbf{k}}\) is \(\{[e_1], [e_{n+1}]\}\). Since \(\mathcal{V}_{\mathbf{k}}\) acts by biholomorphisms, any \(g \in \mathcal{V}_{\mathbf{k}}\) must either fix both \([e_1]\) and \([e_{n+1}]\) or swap them. We must consider  two cases:

\begin{itemize}
    \item \textbf{Case 1:} If \(g\) swaps \([e_1]\) and \([e_{n+1}]\), then \(g(e_1) = e_{n+1}\). Coprimality of \(\mathbf{k}\) enforces symmetry in the differences \(k_{n-j} - k_{n-j+1} = k_{j+1} - k_{j+2}\) (via equating weights for the transformed curve), which holds only if \(\mathbf{k}\) is symmetric.
    
    \item \textbf{Case 2:} If \(\mathbf{k}\) is symmetric, the matrix \(J_n\) explicitly swaps the coordinates \([z, w] \leftrightarrow [w, z]\), satisfying:
    \[
    [J_n] \xi_{\mathbf{k}}([a, b]) = \xi_{\mathbf{k}}([b, a]).
    \]
    Hence, any \(g \in \mathcal{V}_{\mathbf{k}}\) either preserves \([e_1]\) (and lies in \(\operatorname{Isot}([e_1], \operatorname{H}_{\mathbf{k}})\)) or is a composition of an element of \(\operatorname{Isot}([e_1], \operatorname{H}_{\mathbf{k}})\) with \([J_n]\).
\end{itemize}

Thus,  
if \(\mathbf{k}\) is symmetric, \(\mathcal{V}_{\mathbf{k}}=\langle \operatorname{H}_{\mathbf{k}},[J_n]\rangle \). Otherwise, no swapping occurs, and \(\operatorname{H}_{\mathbf{k}} = \mathcal{V}_{\mathbf{k}}\).  
\end{proof}

		{\bf Proof of Corollary \ref{t:main2}} This is an easy consequence of Theorems \ref{t:main1} and  \ref{c:aut}.$\square$\\

\begin{center}
    \scshape \bfseries Funding
\end{center}
The research of A. Cano was partially supported by SECIHTI-SNII 104023 and PAPPIT-UNAM IN112424, and the research of L. Loeza was partially supported by  SECIHTI-SNII 43823 and PAPPIT-UNAM IN112424.

\begin{center}
    \scshape \bfseries Declarations
\end{center}
All authors have contributed equally to the paper and declare no conflicts of interest.

\begin{center}
    \scshape \bfseries Acknowledgments
\end{center}
The authors thank the people of UCIM-UNAM  and IIT-UACJ  for their hospitality during the preparation of this paper. We also thank A. D. Rios, J. A. Seade, and M. A. Ucan for their valuable discussions.

  \bibliographystyle{amsplain}

	\end{document}